\renewcommand\subsection{\@startsection{subsection}{2}%
\normalparindent{.5\linespacing\@plus.7\linespacing}{-.5em}
{\normalfont\bfseries}}
\renewcommand\subsubsection{\@startsection{subsubsection}{3}%
\normalparindent{.5\linespacing\@plus.7\linespacing}{-.5em}
{\normalfont\bfseries}}
\newcommand{\bullpar}[1]{\vspace{.5em}\noindent $\bullet$ \normalfont {\bfseries #1.}}
\def\@tocline#1#2#3#4#5#6#7{\relax
  \ifnum #1>\c@tocdepth 
  \else
    \par \addpenalty\@secpenalty\addvspace{#2}%
    \begingroup \hyphenpenalty\@M
    \@ifempty{#4}{%
      \@tempdima\csname r@tocindent\number#1\endcsname\relax
    }{%
      \@tempdima#4\relax
    }%
    \parindent\z@ \leftskip#3\relax \advance\leftskip\@tempdima\relax
    \rightskip\@pnumwidth plus4em \parfillskip-\@pnumwidth
    #5\leavevmode\hskip-\@tempdima
      \ifcase #1
       \or\or \hskip 1em \or \hskip 2em \else \hskip 3em \fi%
      #6\nobreak\relax
    \dotfill\hbox to\@pnumwidth{\@tocpagenum{#7}}\par
    \nobreak
    \endgroup
  \fi}
\newtheorem{theorem}{Theorem}
\newtheorem{proposition}{Proposition}[section]
\newtheorem{lemma}[proposition]{Lemma}
\newtheorem{corollary}[proposition]{Corollary}
\theoremstyle{definition}
\newtheorem{definition}[proposition]{Definition}
\numberwithin{equation}{section}
\newcommand{\C}{{\mathbb C}}
\newcommand{\E}{{\mathcal E}}
\newcommand{\cI}{{\mathcal I}}
\newcommand{\G}{{\mathcal{G}}}
\newcommand{\R}{{\mathbb R}}
\newcommand{\Z}{{\mathbb Z}}
\newcommand{\D}{{\Delta}}
\newcommand{\T}{{\mathbb T}}
\newcommand{\Rho}{{\mathrm{P}}}
\renewcommand{\O}{{\Omega}}
\renewcommand{\o}{{\omega}}
\renewcommand{\d}{\mathrm{d}}
\newcommand{\ep}{\varepsilon}
\newcommand\e{{\rm e}}
\renewcommand{\u}{\boldsymbol{u}}
\renewcommand{\b}{\beta}
\newcommand{\p}{\partial}
\renewcommand{\l}{\left}
\renewcommand{\r}{\right}
\def\Re{{\rm Re}}
\def\Im{{\rm Im}}
\begin{document}

\title[Explicit solutions to Euler-Boussinesq in the periodic strip]{Explicit solutions and linear inviscid damping in the Euler-Boussinesq equation near a stratified Couette flow in the periodic strip}

\author[M. Coti Zelati]{Michele Coti Zelati}
\address{Department of Mathematics, Imperial College London, London, SW7 2AZ, UK}
\email{m.coti-zelati@imperial.ac.uk}

\author[M. Nualart]{Marc Nualart}
\address{Department of Mathematics, Imperial College London, London, SW7 2AZ, UK}
\email{m.nualart-batalla20@imperial.ac.uk }

\subjclass[2020]{35Q31, 76B70, 76E05}

\keywords{Inviscid damping, stationary-phase method, Boussinesq approximation}

\begin{abstract}
This short note provides explicit solutions to the linearized Boussinesq equations around the stably stratified Couette flow posed on $\T\times\R$. We consider the long-time behavior of such solutions and prove inviscid damping of the perturbed density and velocity field for any positive Richardson number, with optimal rates. The explicit solution is obtained through the limiting absorption principle whereas the inviscid damping is proved using oscillatory integral methods.
\end{abstract}

\maketitle

\setcounter{tocdepth}{1}
\tableofcontents

\section{Introduction}\label{inviscid}
The Euler equations under the Boussinesq approximation
\begin{equation}\label{eq:EBintro}
\begin{aligned}
(\p_t+\tilde\u\cdot\nabla)\tilde\o  &= - \mathfrak{g}\,\p_x\tilde\rho,\\
(\p_t+\tilde\u\cdot\nabla)\tilde\rho&=0, 
\end{aligned}
\end{equation}
models the evolution of an incompressible, non-homogeneous ideal fluid whose velocity field is $\tilde\u=\nabla^\perp\Delta^{-1}\tilde\o$, with associated vorticity $\tilde\o=\nabla^\perp \cdot \tilde\u$ and where the density of the fluid is given by $\tilde\rho$. Here, $\mathfrak{g}$ is the gravity constant.

The physical domain in which we consider the Euler-Boussinesq system \eqref{eq:EBintro} is the periodic strip $\T\times\R$, where  
\begin{equation}\label{eq:StratCouette}
\bar{\u}=(y,0), \quad \bar{\rho}(y)=1-\vartheta y, \quad \p_y p=-\mathfrak{g}\bar{\rho}(y),
\end{equation}
constitutes a steady solution for the equations of motion and represents a stably stratified Couette flow whose density slope is $\vartheta>0$. Our interest lies in describing the linearized long-time dynamics of solutions to \eqref{eq:EBintro} that are near the stationary configuration \eqref{eq:StratCouette}. As such, we consider the perturbed velocity 
$\tilde{\u}=\bar{\u}+\u$ and density profile $\tilde{\rho}=\bar{\rho}+\vartheta\rho$, and define the corresponding vorticity perturbation $\o=\nabla^\perp\cdot \u$. The linearized Euler-Boussinesq system \eqref{eq:EBintro} nearby the stably stratified Couette flow \eqref{eq:StratCouette} then takes the form
\begin{equation}\label{eq:linEulerBouss}
\begin{cases}
\p_t\o + y\p_x\o=-\b^2\p_x\rho \\
\p_t\rho + y\p_x\rho =\p_x\psi,\\
\D\psi=\o,
\end{cases}
\end{equation}
with $\psi$ being the stream-function of the velocity field $\u$ and $\beta=\sqrt{\vartheta \mathfrak{g}} >0$. 
In the periodic setting $x\in\T$, it is advantageous to write 
\begin{equation}\label{xFourier}
    \o(t,x,y)=\sum_{m\in\Z}\o_m(t,y)\e^{imx},\quad \rho(t,x,y)=\sum_{m\in\Z}\rho_m(t,y)\e^{imx}, \quad \psi(t,x,y)=\sum_{m\in\Z}\psi_{m}(t,y)\e^{imx}.
\end{equation}
Thus, \eqref{eq:linEulerBouss} now reads
\begin{align}
    (\p_t +imy) \o_m &= -im\b^2\rho_m \label{eq:lin om}\\
    (\p_t + imy) \rho_m &= im\psi_m, \label{eq: lin rhom}
\end{align}
where further
\begin{equation}\label{eq:defDm}
    \begin{cases}
        \D_m\psi_m = \o_m,  \\
        \lim_{|y|\to  \infty}\psi_m=0,  
    \end{cases} \qquad \D_m:= \p_y^2 - m^2,
\end{equation}
for all $m\in\Z$. Our first result shows that \eqref{eq:lin om}-\eqref{eq: lin rhom}, and thus \eqref{eq:linEulerBouss} through \eqref{xFourier} can be solved explicitly in the physical space $y\in\R$ as truncated convolutions of oscillating Whittaker functions against suitable combinations of the initial data. The Whittaker functions $W_{0,\gamma}$ with $\gamma\in\C$ satisfy (see \cite{NIST,Whittaker03}) 
\begin{equation}\label{eq:W0gamma}
    \p_\zeta^2W_{0,\gamma} + \l( -\frac14 + \frac{1/4 - \gamma^2}{\zeta^2}\r)W_{0,\gamma}=0, \qquad W_{0,\gamma}(\zeta)\sim \e^{-\frac12\zeta} \quad\text{ as }\quad \zeta\rightarrow\infty,
\end{equation}
and constitute the main ingredient in the construction of the explicit solutions.

\begin{theorem}\label{thm: main solution formula}
Let $\b> 0$. Given initial conditions $(\o^0, \rho^0)$ such that
\begin{equation}\label{eq:zeroxave}
\int_\T \o^0(x,y)\d x = \int_\T\rho^0(x,y)\d x =0,
\end{equation}
the unique solution to \eqref{eq:linEulerBouss} is given through \eqref{xFourier} and \eqref{eq:defDm} by $\psi_0=\rho_0\equiv 0$,
\begin{equation}\label{eq:psim}
\begin{aligned}
\psi_m(t,y)&=\frac{\e^{-imyt}}{2|m|\pi}\cos(\gamma\pi) \l(\int_0^\infty \e^{im\eta t}W(\eta) \int_0^\infty W(\xi)G_{m}(\eta,\xi,y)\d \xi\d \eta \r. \\
&\qquad\qquad\qquad\qquad\qquad \l. -\int_0^\infty \e^{-im\eta t}W(\eta)\int_0^\infty W(\xi)G_{m}(-\eta,-\xi,y)\d \xi\d \eta \r)
\end{aligned}
\end{equation}
and 
\begin{equation}\label{eq:rhom}
\begin{aligned}
\rho_m(t,y)&=\frac{\e^{-imyt}}{2|m|\pi}\cos(\gamma\pi) \l(\int_0^\infty \e^{im\eta t}\frac{W(\eta)}{\eta} \int_0^\infty W(\xi)G_{m}(\eta,\xi,y)\d \xi\d \eta \r. \\
&\qquad\qquad\qquad\qquad\qquad \l. +\int_0^\infty \e^{-im\eta t}\frac{W(\eta)}{\eta}\int_0^\infty W(\xi)G_{m}(-\eta,-\xi,y)\d \xi\d \eta \r),
\end{aligned}
\end{equation}
for all $m\neq 0$. Here, for $\gamma:=\sqrt{1/4-\b^2}$ we denote $W(\cdot)=W_{0,\gamma}(2|m|\cdot)$, where $W_{0,\gamma}$ satisfies \eqref{eq:W0gamma}. Further, 
\begin{equation*}
G_m(\eta,\xi,y)=\D_m\l(\rho_m^0(\xi+y-\eta)-\frac{1}{\b^2}\xi \o_m^0(\xi+y-\eta)\r).
\end{equation*}
\end{theorem}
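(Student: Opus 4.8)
The plan is to solve the system \eqref{eq:lin om}--\eqref{eq: lin rhom} mode by mode via the limiting absorption principle, which is the standard route for spectral problems on $\R$ where the point spectrum is absent and the resolvent must be understood as a boundary value from the resolvent set. First I would pass to the moving frame by writing $\o_m(t,y) = \e^{-imyt} f_m(t,y)$ and $\rho_m(t,y) = \e^{-imyt} g_m(t,y)$, which removes the transport term $imy$ and turns \eqref{eq:lin om}--\eqref{eq: lin rhom} into a non-autonomous but lower-order system for $(f_m,g_m)$ coupled through the stream function; equivalently, and more cleanly, I would take the Laplace transform in time, $\hat\o_m(c,y) = \int_0^\infty \e^{-i m c t}\o_m(t,y)\,\d t$ for $\Im(mc)>0$, reducing \eqref{eq:linEulerBouss} to a resolvent equation of the form $(y-c)\D_m\hat\psi_m + \text{(source)} = \b^2 \hat\psi_m/(y-c)$, i.e.\ a Taylor--Goldstein-type ODE in $y$ with spectral parameter $c$. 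The key observation is that near a critical layer $y=c$ this ODE has a confluent-hypergeometric structure: after the change of variables $\zeta = 2|m|(y-c)$ the homogeneous equation becomes exactly \eqref{eq:W0gamma} with $\gamma = \sqrt{1/4-\b^2}$, so the decaying solution is $W_{0,\gamma}(2|m|(y-c))$ on each side of the critical layer.

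Next I would construct the Green's function $G_m$ for the inhomogeneous Taylor--Goldstein operator by the usual variation-of-parameters recipe: two decaying Whittaker solutions $W_{0,\gamma}(2|m|(y-c))$ for $y>\Re c$ and its reflection for $y<\Re c$, glued across $y=c$ with the Wronskian jump dictated by the $1/(y-c)$ singularity. The Wronskian of the two Whittaker branches is where the prefactor $\cos(\gamma\pi)/(2|m|\pi)$ and the precise combination $\rho_m^0 - \b^{-2}\xi\,\o_m^0$ in $G_m$ will emerge — the latter because the source term in the Laplace-transformed system naturally packages the initial vorticity and density as $\D_m(\rho_m^0 - \b^{-2}(y-c)\o_m^0)$ once one eliminates $\hat\rho_m$ in favor of $\hat\psi_m$. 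Then I would invert the Laplace transform: $\psi_m(t,y) = \frac{1}{2\pi i}\int_{\Im(mc)=\sigma} \e^{imct}\hat\psi_m(c,y)\,\d c$, push the contour to the real $c$-axis (limiting absorption), and pick up the contribution from the branch cut / continuous spectrum along $c\in\R$. Writing $c = y - \eta$ on one side and $c = y+\eta$ on the other, the $\d c$ integral becomes the two $\d\eta$ integrals in \eqref{eq:psim}, with the oscillatory factor $\e^{\pm im\eta t}$ coming from $\e^{imct} = \e^{imyt}\e^{\mp im\eta t}$ and the overall $\e^{-imyt}$ factored out; the double convolution in $(\eta,\xi)$ is precisely the Green's function acting on the source. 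The density formula \eqref{eq:rhom} then follows from \eqref{eq: lin rhom}, i.e.\ $\hat\rho_m = \psi_m/(y-c) = \psi_m/\eta$ after the substitution, explaining the extra $1/\eta$ weight and the sign change (from $\D_m^{-1}$ versus the direct appearance of $\rho_m^0$).

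The main obstacle I anticipate is making the limiting absorption step rigorous: one must show that $\hat\psi_m(c,y)$ extends continuously to $\Im c = 0$ from above (and from below) as an $L^2_y$-valued function, that the contour deformation is justified (no poles, i.e.\ no embedded eigenvalues, for $\b>0$ — this uses the stable stratification and is where the condition $\b>0$ is essential rather than $\b=0$), and that the resulting $\eta$-integrals converge — here the exponential decay $W_{0,\gamma}(\zeta)\sim \e^{-\zeta/2}$ from \eqref{eq:W0gamma} controls the large-$\eta$ and large-$\xi$ behavior, while near $\eta=0,\xi=0$ one needs the mild singularity of $W_{0,\gamma}$ (and of $W(\eta)/\eta$ in \eqref{eq:rhom}) to be integrable against $G_m$, which it is for the relevant range of $\gamma$ since $\Re\gamma<1/2$. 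A secondary technical point is verifying the boundary condition $\lim_{|y|\to\infty}\psi_m=0$ in \eqref{eq:defDm} and the uniqueness claim: uniqueness follows because the linear system \eqref{eq:linEulerBouss} with zero data has only the zero solution in the relevant energy space (an energy estimate using the conserved quantity associated to the Boussinesq structure), so once we exhibit \eqref{eq:psim}--\eqref{eq:rhom} as \emph{a} solution — by differentiating under the integral sign and checking it satisfies \eqref{eq:lin om}--\eqref{eq: lin rhom} and \eqref{eq:defDm} directly, which is the routine computational core — we are done. I would present the verification direction as the clean proof and relegate the heuristic derivation via Laplace transform to a remark or to the motivation preceding the statement.
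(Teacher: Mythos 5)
Your proposal follows the paper's structure exactly: a heuristic derivation via Dunford's formula and the limiting absorption principle (the paper's Section~3), followed by a direct verification that the explicit formulas \eqref{eq:psim}--\eqref{eq:rhom} satisfy the moved-frame equations (the paper's Section~2, which is the actual proof of Theorem~\ref{thm: main solution formula}). The one understatement is calling that verification ``routine''—the computation turns on the Galilean identity $(\p_y + \p_\eta)G_m = 0$ and on the Whittaker equation \eqref{eq:W0gamma} collapsing to $\D_m W(\eta) = -\b^2 W(\eta)/\eta^2$, which is precisely where the choice of $W_{0,\gamma}$ is seen to be forced rather than merely convenient.
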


The zero modes in $x\in\T$ of initial data evolving according to \eqref{eq:linEulerBouss} are constants of motion, so that \eqref{eq:zeroxave} does not actually constitute a restriction on the initial data. The expressions \eqref{eq:psim} and \eqref{eq:rhom} give rise to real-valued solutions $\psi$, $\o$ and $\rho$ via \eqref{xFourier} due to the fact that $\psi_m = \overline{\psi_{-m}}$ and $\rho_m$ = $\overline{\rho_{-m}}$, which are straightforward consequences of $\o^0$ and $\rho^0$ being real-valued. In particular, we shall assume without loss of generality throughout the article that $m\geq 1$.

Our second result consists in the derivation of sharp decay estimates, which quantify the phenomenon of inviscid damping.

\begin{theorem}\label{thm:mainstrip}
Let $\b>0$ and assume that the initial data $(\o^0,\rho^0)$ to \eqref{eq:linEulerBouss} satisfies \eqref{eq:zeroxave}. Let $\u=(u^x,u^y)=\nabla^\perp\psi=(-\p_y\psi,\p_x\psi)$ be the corresponding velocity field. We have the following estimates. 
\begin{itemize}
\item If $\b^2\neq1/4$, let $\mu=\Re\sqrt{1/4-\b^2}$ and $\nu=\Im\sqrt{1/4-\b^2}$. Then,
 \begin{align}
\Vert u^x(t) \Vert_{L^2}&\lesssim\frac{1}{ t^{\frac12-\mu}}\l( \Vert \rho^0 \Vert_{L^2_xH^3_y} + \Vert \o^0 \Vert_{L^2_xH^3_y}\r), \label{eq:decayvx} \\
\Vert u^y(t) \Vert_{L^2}&\lesssim \frac{1}{ t^{\frac32-\mu}}\l( \Vert \rho^0 \Vert_{L^2_xH^4_y} + \Vert \o^0 \Vert_{L^2_xH^4_y}\r), \label{eq:decayvy}\\
\Vert \rho(t) \Vert_{L^2}&\lesssim \frac{1}{t^{\frac12-\mu}}\l( \Vert \rho^0 \Vert_{L^2_xH^3_y} + \Vert \o^0 \Vert_{L^2_xH^3_y}\r), \label{eq:decayrho} 
\end{align}
for every $t\geq 1$.
\item If $\b^2=1/4$, then 
\begin{align}
\Vert u^x(t) \Vert_{L^2}&\lesssim \frac{1+\log(t)}{ t^\frac12}\l( \Vert \rho^0 \Vert_{L^2_xH^3_y} + \Vert \o^0 \Vert_{L^2_xH^3_y}\r), \label{eq:decayvxlog}\\
\Vert u^y(t) \Vert_{L^2}&\lesssim \frac{1+\log(t)}{ t^\frac32}\l( \Vert \rho^0 \Vert_{L^2_xH^4_y} + \Vert \o^0 \Vert_{L^2_xH^4_y}\r), \label{eq:decayvylog}\\
\Vert \rho(t) \Vert_{L^2}&\lesssim\frac{1+\log(t)}{t^\frac12}\l( \Vert \rho^0 \Vert_{L^2_xH^3_y} + \Vert \o^0 \Vert_{L^2_xH^3_y}\r),   \label{eq:decayrholog} 
\end{align}
for every $t\geq 1$.
\end{itemize}
\end{theorem}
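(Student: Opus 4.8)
The plan is to start from the explicit solution formulas \eqref{eq:psim} and \eqref{eq:rhom} furnished by Theorem \ref{thm: main solution formula} and to read off the decay by a careful stationary-phase (oscillatory integral) analysis of the $\eta$-integrals, treating $m\geq 1$ fixed and then summing in $m$. First I would integrate by parts in $\xi$ to move the operator $\D_m$ off $G_m$ onto $W(\xi)$, so that the inner $\xi$-integral becomes $\int_0^\infty \D_m W(\xi)\,\big(\rho_m^0-\tfrac1{\b^2}\xi\o_m^0\big)(\xi+y-\eta)\,\d\xi$; using \eqref{eq:W0gamma}, $\D_m W(\xi)$ (after rescaling $W(\cdot)=W_{0,\gamma}(2m\cdot)$) is $\tfrac14\cdot(2m)^2 W - \tfrac{1/4-\gamma^2}{\xi^2}(2m)^2 W + \dots$, i.e.\ up to the $\xi^{-2}$ Whittaker identity one can replace $\D_mW$ by an explicit multiple of $W$ plus a term that is integrable near $\xi=0$ (recall $W_{0,\gamma}(\zeta)$ behaves like $\zeta^{1/2\mp\nu}$ or $\zeta^{1/2}\log\zeta$ near $0$, which is exactly where the $\mu=\Re\gamma$ and the logarithmic threshold case $\b^2=1/4$ enter). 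Next, after translating variables ($\xi+y-\eta\mapsto$ new variable) and recognising that the combined object is, in the phase $\e^{\pm im(\eta t)}$ with the prefactor $\e^{-imyt}$, a Fourier-type integral in $\eta$ over $(0,\infty)$ against an amplitude built from $W(\eta)$ (or $W(\eta)/\eta$) convolved with the data, I would isolate the behaviour of the amplitude near the endpoint $\eta=0$, which governs the large-$t$ asymptotics.

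The key mechanism is that near $\eta=0$ one has $W(\eta)\sim c\,\eta^{1/2-\nu}+\bar c\,\eta^{1/2+\nu}$ when $\b^2<1/4$ (so $\gamma=\mu$ real, $\mu\in(0,1/2)$), $W(\eta)\sim c\,\eta^{1/2}\cos(\nu\log(2m\eta)+\phi)$ when $\b^2>1/4$ (so $\gamma=i\nu$, $\mu=0$), and $W(\eta)\sim c\,\eta^{1/2}\log\eta$ at the threshold $\b^2=1/4$. An integral $\int_0^\infty \e^{i m\eta t}\,\eta^{a}\,h(\eta)\,\d\eta$ with $h$ smooth and compactly supported (the data contributes the smooth, rapidly-localised-in-$y$ part; the tail $\eta\to\infty$ is harmless because $W$ decays exponentially) decays like $t^{-(1+a)}$, via the standard one-dimensional stationary-phase/endpoint lemma, so the slowest piece of $\psi_m$ itself ($a=1/2-\mu$ effectively, since $W(\eta)$ contributes $\eta^{1/2-\mu}$ and there is a further $\eta$ from the structure of $G_m$ after the $\xi$-integration — I will need to track this power bookkeeping precisely) decays like $t^{-(3/2-\mu)}$, and then $u^x=-\p_y\psi$, $\rho$, $u^y=\p_x\psi=im\psi$ pick up their respective rates $t^{-(1/2-\mu)}$, $t^{-(1/2-\mu)}$, $t^{-(3/2-\mu)}$ after accounting for the $\e^{-imyt}$ prefactor (differentiating in $y$ brings down a factor $t$, while $\rho_m$ carries an extra $\eta^{-1}$ relative to $\psi_m$ which raises the critical power by one, explaining why $\rho$ decays like $u^x$ and not like $u^y$). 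In the threshold case the $\eta^{1/2}\log\eta$ amplitude produces the $t^{-1}\log t$ correction to the endpoint asymptotic, which is exactly the source of the $1+\log t$ factors in \eqref{eq:decayvxlog}–\eqref{eq:decayrholog}. Throughout, the $H^3_y$ and $H^4_y$ norms of the data appear because one needs a few integrations by parts in the translation variable (equivalently, decay of $\widehat{\rho^0}$, $\widehat{\o^0}$ in the $y$-frequency) to make the amplitude $h$ sufficiently regular and decaying for the endpoint lemma to apply and to control the sum over $m$; the $\o^0$-part comes with the extra factor $\xi$ (hence one more $y$-derivative, explaining $H^4_y$ vs.\ $H^3_y$ for $u^y$ — actually the discrepancy $H^3$ vs.\ $H^4$ between $u^x$/$\rho$ and $u^y$ is the same $\p_x$ that relates $u^y$ to $\psi$ costing one derivative in the bookkeeping of the endpoint expansion).

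Concretely the steps are: (i) reduce to a single mode $m\geq 1$ and write $\psi_m,\rho_m,u^x_m,u^y_m$ as $\e^{-imyt}$ times oscillatory integrals $\int_0^\infty \e^{\pm im\eta t}A_{m}^{\pm}(\eta,y)\,\d\eta$; (ii) integrate by parts in $\xi$ using the Whittaker ODE \eqref{eq:W0gamma} to rewrite $A_m^\pm$ as a convolution of a Whittaker-type kernel against $\D_m^{\mathrm{eff}}$ of the data, isolating the near-zero behaviour $A_m^\pm(\eta,y)=\eta^{1/2-\mu}\,b_m^\pm(\eta,y)+(\text{lower order})$ with $b_m^\pm$ smooth (resp.\ with the $\log$/oscillatory modifications in the two special Richardson regimes); (iii) prove a clean endpoint-asymptotics lemma: for $h\in C^k_c$, $\int_0^\infty \e^{is\eta}\eta^a h(\eta)\,\d\eta=\Gamma(a+1)(-is)^{-(a+1)}h(0)+O(s^{-(a+2)}\|h\|_{C^{k}})$ (and its $\log$-variant), which I would cite from standard references on the method of stationary phase; (iv) apply this with $s=mt$, differentiate the $\e^{-imyt}$ prefactor as needed for $u^x,u^y$, collect the leading power of $t$, and note the $m$-dependence is $|m|^{-(1/2-\mu)}$ or better so that summing $\sum_{m\geq1}$ against $\|\widehat{\rho^0}(m,\cdot)\|_{H^s_y}$ etc.\ converges and reproduces the stated $L^2_xH^s_y$ norms by Plancherel in $x$; (v) handle the large-$\eta$ region trivially via the exponential decay $W_{0,\gamma}(\zeta)\sim\e^{-\zeta/2}$. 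The main obstacle I anticipate is step (ii): extracting the precise leading singular behaviour of the amplitude $A_m^\pm$ near $\eta=0$ uniformly in $m$ and in the spatial variable $y$, because $W(\cdot)=W_{0,\gamma}(2m\cdot)$ couples the small-$\eta$ and large-$m$ limits, the $\D_m$ in $G_m$ must be disentangled carefully (it acts in $y$ but is evaluated at the shifted argument $\xi+y-\eta$), and the three Richardson regimes $\b^2\lessgtr1/4$ require genuinely different local expansions of $W_{0,\gamma}$ near the origin; keeping the error terms controlled by a fixed number of $y$-Sobolev derivatives of the data, with constants uniform in $m$, is where the real work lies.
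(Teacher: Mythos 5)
Your overall strategy matches the paper's closely: write $\psi_m, \rho_m$ as $\e^{-imyt}$ times a one-dimensional oscillatory integral $\int_0^\infty \e^{\pm im\eta t}A_m^{\pm}(\eta,y)\,\d\eta$, extract the decay from the endpoint behaviour of the amplitude at $\eta=0$ via the near-origin asymptotics of $W_{0,\gamma}$ (Lemmas \ref{asymptotic expansion W}--\ref{asymptotic expansion special W}), treat the three Richardson regimes $\b^2\lessgtr 1/4$ through the three local expansions of $W_{0,\gamma}$, and close by Plancherel in $x$. The paper does exactly this, using Lemma \ref{lemma: generic decay-by-regularity} (a split-at-$\delta=1/(4mt)$ version of the endpoint estimate) rather than your sharper $\Gamma(a+1)(-is)^{-(a+1)}h(0)$ expansion; for an upper bound the cruder lemma suffices and is easier to verify uniformly in $m$, since $A_m^\pm$ is not compactly supported in $\eta$ and the $\eta\to\infty$ tail is controlled through the $\Vert\p_\eta F\Vert_Y$ term after one integration by parts, not by the standard compactly-supported endpoint lemma you cite.

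There is, however, a step in your plan that would fail as written: integrating by parts in $\xi$ to move $\D_m$ from $G_m$ onto $W(\xi)$. Near $\xi=0$ one has $W(\xi)\sim\xi^{1/2-\mu}$ and hence $W'(\xi)\sim\xi^{-1/2-\mu}$ and $W''(\xi)\sim\xi^{-3/2-\mu}$; the boundary term $W'(\xi)g(\xi)\big|_{\xi\to 0^+}$ diverges for every $\mu\in[0,1/2)$, and the putative integral $\int_0^\delta W''(\xi)g(\xi)\,\d\xi$ diverges as well. These infinities cancel only in a regularized principal-value sense, which is exactly the delicate bookkeeping you would have to carry out but whose structure you haven't specified. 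Moreover, for the $\tfrac1{\b^2}\xi\,\o_m^0(\xi+y-\eta)$ piece of $G_m$, $\D_m$ (acting in $y$) is \emph{not} the same as $\p_\xi^2-m^2$ applied to the whole integrand, because of the explicit $\xi$-prefactor; so the identification of $\D_m G_m$ with $\p_\xi^2$-derivatives is more subtle than stated. The paper avoids all of this by simply keeping $\D_m$ on the data and paying the price of two extra $y$-derivatives — this is precisely where the $H^{2+j}_y$ (i.e.\ $H^3_y$, $H^4_y$) norms in the theorem come from. If you drop your step (ii) and leave $G_m$ alone, the rest of your plan goes through and coincides with the paper's argument.

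One more point worth flagging: your derivation of the $u^x=-\p_y\psi$ rate by the heuristic "differentiating the $\e^{-imyt}$ prefactor brings down a factor $t$" only gives the bound after an additional observation. The term $-imt\,\e^{-imyt}\Psi_m$ should be integrated by parts once in $\eta$ (as in the proof of Corollary \ref{cor: invdamp dy psi}), and then the relation $(\p_\eta+\p_y)G_m=0$ produces an exact cancellation that replaces $W(\eta)$ by $W'(\eta)$ in the amplitude — shifting the endpoint power from $\eta^{1/2-\mu}$ to $\eta^{-1/2-\mu}$, hence the rate $t^{-(1/2-\mu)}$. Without noting this cancellation you are left with two separate contributions that, individually, do not obviously cancel the extra $\log$ in the threshold case or reproduce the clean power in the generic case.
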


The inviscid damping estimates \eqref{eq:decayvx}-\eqref{eq:decayrholog} describe the long-time dynamics of solutions to \eqref{eq:linEulerBouss} and show the linear asymptotic stability of the stratified Couette configuration \eqref{eq:StratCouette} for the Euler-Boussinesq system \eqref{eq:EBintro}. The decay is produced by two phenomena. Firstly, there is \emph{mixing} due to the background Couette flow and secondly there is \emph{stratification} due to the background density. The effect of mixing has been thoroughly studied in the homogeneous Euler equations both at the linear \cites{WZZ18,WZZ19,Zillinger16,JiaGev20} and non-linear level \cites{BM15,IJnon20,MZ20}. 

Estimates analogous to those of Theorem \ref{thm:mainstrip} have been already obtained in \cite{YL18} using an explicit formula for solutions on the Fourier side (inspired by an early work of Hartman \cite{Hartman} in 1975), and in \cite{BCZD22} via an energy method. Our approach is rather based on a stationary-phase type argument, exploiting the explicit solutions of Theorem \ref{thm: main solution formula} in physical space and obtaining decay rates related to the regularity (and more precisely on the asymptotic expansion) of the Whittaker functions about the origin. While these formulae do not produce a new result in the periodic strip $\T\times\R$, our method allows to treat the physically relevant case of the periodic channel $\T\times [0,1]$, see \cite{CZN23chan}, and it is therefore more robust in this sense. In \cite{CZN23chan} explicit solutions are not available, however one similarly can write solutions to \eqref{eq:linEulerBouss} through oscillatory integrals now involving a limiting absorption principle in which the regularity of the limiting functions (and thus the gained time-decay via stationary-phase arguments) is related to that of the Whittaker functions.

\subsection{Notation and assumptions}
Throughout the article, we assume that $\b>0$ and $m\geq 1$. To quantify the regularity of the initial data, for $j\geq 0$ we introduce
\begin{equation*}
    Q_{j,m}=\Vert \rho_m^0 \Vert_{H^{2+j}_y(\R)} + \Vert \o_m^0 \Vert_{H^{2+j}_y(\R)}.
\end{equation*}
As usual, we say that $A\lesssim B$ when there exists $C>0$ such that $A\leq CB$.

\subsection{Plan of the article}
In Section \ref{sec: proof thm sol} we prove Theorem \ref{thm: main solution formula} and in Section \ref{sec:heuristics} we provide an heuristic explanation for the form of the solutions \eqref{eq:psim}, \eqref{eq:rhom}. Section \ref{sec: time decay estimates} is devoted to the proof of Theorem \ref{thm:mainstrip}. In the Appendix \ref{app: Whittaker} we provide the main asymptotic  expansions for the Whittaker functions that are used to establish Theorem \ref{thm:mainstrip}.

\section{Proof of Theorem \ref{thm: main solution formula}}\label{sec: proof thm sol}
The proof consists on showing that $\psi_m$, $\rho_m$ and $\o_m$ given by \eqref{eq:psim}, \eqref{eq:rhom} and \eqref{eq:defDm} respectively, satisfy the linearized Euler-Boussinesq equations \eqref{eq:lin om}-\eqref{eq: lin rhom}. According to \eqref{eq:psim}, \eqref{eq:rhom} and \eqref{eq:defDm}, we write 
\begin{equation}\label{eq:defPsiRho}
   \psi_m(t,y)=\e^{-imyt}\Psi_m(t,y), \qquad \rho_m(t,y)=\e^{-imty}\Rho_m(t,y), \qquad  \o_m(t,y)=\e^{-imyt}\O_m(t,y),
\end{equation}
where
\begin{equation}\label{eq:defOm}
    \O_m :=-m^2t^2\Psi_m -2imt\p_y\Psi_m +\D_m\Psi_m .
\end{equation}
Now, with this formulation we must check that $\O_m,\Psi_m$ satisfy
\begin{align}
\p_t\O_m&=-im\b^2\Rho_m \label{eq:Om}\\
\p_t\Rho_m&=im\Psi_m. \label{eq:Rhom}
\end{align}
Clearly, \eqref{eq:Rhom} follows directly from \eqref{eq:psim}, \eqref{eq:rhom} and \eqref{eq:defPsiRho}. To show \eqref{eq:Om}, we first notice that
\begin{equation*}
\p_t\O_m=-2m^2t\Psi_m -m^2t^2\p_t\Psi_m -2im\p_y\Psi_m -2imt\p_t\p_y\Psi_{m}+ \D_m\p_t\Psi_m,
\end{equation*}
where, from \eqref{eq:psim} and \eqref{eq:defPsiRho}, we have that
\begin{equation*}
\begin{aligned}
-2m^2t\Psi_m&=-2m^2t\frac{\cos(\gamma\pi)}{2m\pi} \l(\int_0^\infty \frac{1}{imt}\p_\eta    \l(\e^{im\eta t}\r)W(\eta ) \int_0^\infty W(\xi)G_m(\eta,\xi,y)\d \xi\d \eta  \r. \\
&\qquad\qquad\qquad\qquad\qquad \l. +\int_0^\infty \frac{1}{imt}\p_\eta   \l( \e^{-im\eta t}\r)W(\eta )\int_0^\infty W(\xi)G_m(-\eta,-\xi,y)\d \xi\d \eta  \r) \\
&=-2mi\frac{\cos(\gamma\pi)}{2m\pi} \l(\int_0^\infty \e^{im\eta t}\p_\eta   \l(W(\eta ) \int_0^\infty W(\xi)G_m(\eta,\xi,y)\d \xi\r)\d \eta  \r. \\
&\qquad\qquad\qquad\qquad\qquad \l. +\int_0^\infty \e^{-im\eta t}\p_\eta   \l(W(\eta )\int_0^\infty W(\xi)G_m(-\eta,-\xi,y)\d \xi\r)\d \eta  \r),
\end{aligned}
\end{equation*}
while
\begin{equation*}
\begin{aligned}
-m^2t^2\p_t\Psi_m&=-m^2t^2\frac{\cos(\gamma\pi)}{2m\pi} \l(\int_0^\infty \e^{im\eta t}im\eta W(\eta ) \int_0^\infty W(\xi)G_m(\eta,\xi,y)\d \xi\d \eta  \r. \\
&\qquad\qquad\qquad\qquad\qquad \l. +\int_0^\infty \e^{-im\eta t}im\eta W(\eta )\int_0^\infty W(\xi)G_m(-\eta,-\xi,y)\d \xi\d \eta  \r) \\
&=im\frac{\cos(\gamma\pi)}{2m\pi} \l(\int_0^\infty  \e^{im\eta t} \p_\eta   ^2\l( \eta W(\eta) \int_0^\infty W(\xi)G_m(\eta,\xi,y)\d \xi\r)\d \eta  \r. \\
&\qquad\qquad\qquad\qquad\qquad \l. +\int_0^\infty  \e^{-im\eta t}\p_\eta   ^2\l(\eta W(\eta)\int_0^\infty W(\xi)G_m(-\eta,-\xi,y)\d \xi\r)\d \eta  \r)
\end{aligned}
\end{equation*}
and 
\begin{equation*}
\begin{aligned}
-2im\p_y\Psi_m&=-2im\frac{\cos(\gamma\pi)}{2m\pi} \l(\int_0^\infty \e^{im\eta t}W(\eta ) \int_0^\infty W(\xi)\p_yG_m(\eta,\xi,y)\d \xi\d \eta  \r. \\
&\qquad\qquad\qquad\qquad\qquad \l. -\int_0^\infty \e^{-im\eta t}W(\eta )\int_0^\infty W(\xi)\p_yG_m(-\eta,-\xi,y)\d \xi\d \eta  \r),
\end{aligned}
\end{equation*}
with also
\begin{equation*}
\begin{aligned}
-2imt\p_t\p_y\Psi_m&=-2imt\frac{\cos(\gamma\pi)}{2m\pi} \l(\int_0^\infty \e^{im\eta t}im\eta W(\eta ) \int_0^\infty W(\xi)\p_yG_m(\eta,\xi,y)\d \xi\d \eta  \r. \\
&\qquad\qquad\qquad\qquad\qquad \l. +\int_0^\infty \e^{-im\eta t}im\eta W(\eta )\int_0^\infty W(\xi)\p_yG_m(-\eta,-\xi,y)\d \xi\d \eta  \r) \\
&=2im\frac{\cos(\gamma\pi)}{2m\pi} \l(\int_0^\infty  \e^{im\eta t} \p_\eta   \l( \eta W(\eta) \int_0^\infty W(\xi)\p_yG_m(\eta,\xi,y)\d \xi\r)\d \eta  \r. \\
&\qquad\qquad\qquad\qquad\qquad \l. -\int_0^\infty  \e^{-im\eta t}\p_\eta   \l(\eta W(\eta)\int_0^\infty W(\xi)\p_yG_m(-\eta,-\xi,y)\d \xi\r)\d \eta  \r)
\end{aligned}
\end{equation*}
and finally
\begin{equation*}
\begin{aligned}
\D_m\p_t\Psi_m&=im\frac{\cos(\gamma\pi)}{2m\pi} \l(\int_0^\infty \e^{im\eta t}\eta  W(\eta ) \int_0^\infty W(\xi)\D_mG_m(\eta,\xi,y)\d \xi\d \eta  \r. \\
&\qquad\qquad\qquad\qquad\qquad \l. +\int_0^\infty \e^{-im\eta t}\eta W(\eta)\int_0^\infty W(\xi)\D_mG_m(-\eta,-\xi,y)\d \xi\d \eta  \r).
\end{aligned}
\end{equation*}
Therefore, it is straightforward to see that
\begin{equation*}
\begin{aligned}
\p_t\O_m=im\frac{\cos(\gamma\pi)}{2m\pi}\l( \int_0^\infty \e^{im\eta t}\O_m^{(+)}(\eta )\d \eta  + \int_0^\infty \e^{im\eta t}\O_m^{(-)}(\eta )\d \eta \r),
\end{aligned}
\end{equation*}
where
\begin{equation*}
\begin{aligned}
\O_m^{(+)}(\eta )&=-2\p_\eta    \l( W(\eta ) \int_0^\infty W(\xi)G_m(\eta,\xi,y)\d \xi \r) + \p_\eta   ^2 \l( \eta W(\eta) \int_0^\infty W(\xi)G_m(\eta,\xi,y)\d \xi \r)
\\ &\quad- 2 W(\eta ) \int_0^\infty W(\xi)\p_yG_m(\eta,\xi,y)\d \xi + 2\p_\eta   \l( \eta W(\eta) \int_0^\infty W(\xi)\p_yG_m(\eta,\xi,y)\d \xi \r) \\ 
&\quad+ \eta W(\eta) \int_0^\infty W(\xi)\D_mG_m(\eta,\xi,y)\d \xi 
\end{aligned}
\end{equation*}
and similarly
\begin{equation*}
\begin{aligned}
\O_m^{(-)}(\eta )&=-2\p_\eta    \l( W(\eta ) \int_0^\infty W(\xi)G_m(-\eta,-\xi,y)\d \xi \r) + \p_\eta   ^2 \l( \eta W(\eta) \int_0^\infty W(\xi)G_m(-\eta,-\xi,y)\d \xi \r)
\\ &\quad+ 2 W(\eta ) \int_0^\infty W(\xi)\p_yG_m(-\eta,-\xi,y)\d \xi - 2\p_\eta   \l( \eta W(\eta) \int_0^\infty W(\xi)\p_yG_m(-\eta,-\xi,y)\d \xi \r) \\ 
&\quad+ \eta W(\eta) \int_0^\infty W(\xi)\D_mG_m(-\eta,-\xi,y)\d \xi.
\end{aligned}
\end{equation*}
Now, note that 
\begin{equation*}
\begin{aligned}
\O_m^{(+)}(\eta )&=\eta \p_\eta   ^2 \l( W(\eta ) \int_0^\infty W(\xi)G_m(\eta,\xi,y)\d \xi \r)
+ 2\eta\p_\eta   \l( W(\eta ) \int_0^\infty W(\xi)\p_yG_m(\eta,\xi,y)\d \xi \r) \\ 
&\quad+ \eta W(\eta) \int_0^\infty W(\xi)\D_mG_m(\eta,\xi,y)\d \xi  \\
&=\eta  W''(\eta ) \int_0^\infty W(\xi)G_m(\eta,\xi,y)\d \xi + 2 \eta W'(\eta ) \int_0^\infty W(\xi)\p_\eta  G_m(\eta,\xi,y)\d \xi \\
&\quad+ \eta  W(\eta ) \int_0^\infty W(\xi)\p_\eta   ^2 G_m(\eta,\xi,y)\d \xi + 2\eta W'(\eta ) \int_0^\infty W(\xi)\p_yG_m(\eta,\xi,y)\d \xi \\
&\quad +2\eta W(\eta ) \int_0^\infty W(\xi)\p_\eta   \p_yG_m(\eta,\xi,y)\d \xi + \eta W(\eta) \int_0^\infty W(\xi)\D_mG_m(\eta,\xi,y)\d \xi
\end{aligned}
\end{equation*}
and further observe that $\l(\p_y+\p_\eta  \r) G_m(\eta ,\xi,y)=0$, from which we deduce that 
\begin{equation*}
\l(\p_\eta   ^2 +2\p_\eta   \p_y+\p_y^2\r) G_m(\eta,\xi,y)= (\p_\eta   +\p_y)^2\G_m(\eta ,\xi,y)=0
\end{equation*}
and 
\begin{equation*}
  \O_m^{(+)}(\eta )=\eta \D_mW(\eta ) \int_0^\infty W(\xi)G_m(\eta,\xi,y)\d \xi  
\end{equation*}
Now, using \eqref{eq:W0gamma}, we see that
\begin{equation*}
    \D_m W(\zeta) +\b^2\frac{W(\zeta)}{\zeta^2}=0,
\end{equation*}
and thus we can write
\begin{equation*}
\O_m^{(+)}(\eta )= -\b^2\frac{W(\eta )}{\eta } \int_0^\infty W(\xi)G_m(\eta,\xi,y)\d \xi
\end{equation*}
and similarly for $\O_m^{(-)}(\eta )$. We finish by assembling and recognising $\Rho_m$,
\begin{equation*}
\begin{aligned}
\p_t\O_m&=im\frac{\cos(\gamma\pi)}{2m\pi}\l( -\int_0^{\infty} \e^{im\eta t}\b^2\frac{W(\eta )}{\eta } \int_0^\infty W(\xi)G_m(\eta,\xi,y)\d \xi \d \eta  \r. \\
&\qquad\qquad\qquad\qquad\qquad \l. -\int_0^{\infty} \e^{-im\eta t}\b^2\frac{W(\eta )}{\eta } \int_0^\infty W(\xi)G_m(-\eta,-\xi,y)\d \xi \d \eta \r) \\
&=-im\b^2 \Rho_m
\end{aligned}
\end{equation*}
With this, the proof is concluded.

\section{Heuristics for the explicit solutions}\label{sec:heuristics}
The presence of the Whittaker functions in \eqref{eq:psim} and \eqref{eq:rhom} is key in the proof of Theorem \ref{thm: main solution formula}, since they relate $\O_m^{(\pm)}$ to $\Rho_m$ due to \eqref{eq:W0gamma}. In fact, this is essentially the main reason why \eqref{eq:psim} and \eqref{eq:rhom} provide solutions to \eqref{eq:linEulerBouss}. However, the proof of Theorem \ref{thm: main solution formula} does not explain why these Whittaker functions arise in \eqref{eq:psim} and \eqref{eq:rhom}. This is precisely the purpose of this section, which sets the framework for obtaining \eqref{eq:psim} and \eqref{eq:rhom} via the method of the limiting absorption principle.

\subsection{Generalized stream-functions and densities}
Writing \eqref{eq:lin om}-\eqref{eq: lin rhom} in the compact  stream-function formulation 
\begin{equation}
\p_t \begin{pmatrix} \psi_m \\ \rho_m\end{pmatrix}+imL_m\begin{pmatrix}
\psi_m \\ \rho_m
\end{pmatrix}=0,
\end{equation}
we directly obtain its solution as
\begin{equation}
\begin{pmatrix} \psi_m \\ \rho_m\end{pmatrix}=\e^{-imL_mt}\begin{pmatrix}
\psi_m^0 \\ \rho_m^0
\end{pmatrix},
\end{equation}
where $L_m$ is the linear operator defined by
\begin{equation}\label{eq:linOP}
L_m =\begin{pmatrix}
\D_m^{-1}(y\D_m)  & \b^2\D_m^{-1} \\
-1 & y
\end{pmatrix}.
\end{equation}
Using Dunford's formula \cite{Engel-Nagel, Taylor-11}, we have that
\begin{equation}\label{eq:Dunford}
\begin{pmatrix}
\psi_m(t,y) \\ \rho_m(t,y)
\end{pmatrix} = \frac{1}{2\pi i} \int_{\p \mathrm{D}}\e^{-imct} (c-L_m)^{-1}\begin{pmatrix}
\psi_m^0(y) \\ \rho_m^0(y)
\end{pmatrix} \,\d c, 
\end{equation}
where here $\mathrm{D}$ is any domain containing the spectrum $\sigma(L_m)$. On the periodic strip, the spectrum $\sigma(L_m)$ is continuous and consists on the real line $\R$. Hence, we can reduce the contour of integration to  
\begin{align}\label{eq:LAPpsirho}
\begin{pmatrix}
\psi_m(t,y) \\ \rho_m(t,y)
\end{pmatrix} 
&=\frac{1}{2\pi i }\lim_{\ep\rightarrow 0}\int_{-\infty}^{+\infty} \e^{-imy_0t}\l[(-y_0-i\ep+L_m)^{-1}-(-y_0+i\ep+L_m)^{-1}\r]
\begin{pmatrix}
\psi_m^0 \\ \rho_m^0 
\end{pmatrix}\, \d y_0.
\end{align}
For $\ep>0$, we denote
\begin{equation*}
\begin{pmatrix}
\psi^{\pm}_{m,\ep}(y,y_0) \\ \rho^\pm_{m,\ep}(y,y_0)
\end{pmatrix}:=\l( -y_0\pm i\ep+L_m\r)^{-1}\begin{pmatrix}
\psi_m^0(y) \\ \rho_m^0(y)
\end{pmatrix}
\end{equation*}
and obtain the coupled system of equations for the generalized stream-functions $\psi_{m,\ep}^\pm$ and generalized densities $\rho_{m,\ep}^\pm$
\begin{equation*}
\begin{aligned}
\o_m^0(y)&=(y-y_0\pm i\ep)\D_m\psi^\pm_{m,\ep}(y,y_0)+\b^2 \rho^\pm_{m,\ep}(y,y_0), \\
\rho_m^0(y)&=(y-y_0\pm i\ep)\rho^\pm_{m,\ep}(y,y_0) -\psi^\pm_{m,\ep}(y,y_0).
\end{aligned}
\end{equation*}
We first solve for the generalized densities
\begin{equation}\label{eq rho m ep}
\rho^\pm_{m,\ep}(y,y_0)=\frac{1}{y-y_0\pm i\ep}\l( \rho_m^0(y)+\psi^\pm_{m,\ep}(y,y_0)\r)
\end{equation}
and from there we obtain the following inhomogeneous \emph{Taylor-Goldstein equation} for the generalized stream-functions $\psi^\pm_{m,\ep}$,
\begin{equation}\tag{TG}\label{eq:TG}
\D_m\psi^{\pm}_{m,\ep}+\b^2\frac{\psi^\pm_{m,\ep}}{(y-y_0\pm i\ep)^2}=\frac{\o_m^0}{y-y_0\pm i\ep}-\b^2\frac{\rho_m^0}{(y-y_0\pm i\ep)^2},
\end{equation}
along with the vanishing of $\psi_{m,\ep}^\pm$ at infinity.

\subsection{Explicit solutions for the generalized stream-functions and densities}
The Taylor-Goldstein equation \eqref{eq:TG} admits a fairly explicit Green's function given by
\begin{equation*}\label{eq: Greens}
\G_{m,\ep}^\pm(y,y_0,z) =-\frac{1}{2m}\begin{cases} 
W(y_0-z\mp i\ep)W(y-y_0\pm i\ep), & y\geq z,  \\
W(z-y_0\pm i\ep)W(y_0-y\mp i\ep), & y\leq z,
\end{cases}
\end{equation*}
where we recall $W(\zeta)=W_{0,\gamma}(2m\zeta)$ for $\gamma^2=\frac14 - \b^2$ and it is such that 
\begin{equation}\label{eq:Whittakereqn}
\p^2_{\zeta} W+\left(-{\frac  {1}{4}} +{\frac  {1/4- (1/4-\b^2)}{4m^2\zeta^{2}}}\right)W=0, \quad W(\zeta)\sim \e^{-m\zeta}, \text{ as } \zeta\rightarrow \infty,
\end{equation}
since the Whittaker function $W_{0,\gamma}$ satisfies \eqref{eq:W0gamma}. To obtain suitable formulas for the generalized stream-functions and densities, define 
\begin{equation*}\label{eq:Fdata}
H_{m,\ep}^\pm(z,y_0):=\D_m\rho_m^0(z)-\frac{1}{\b^2}\D_m\big((z-y_0\pm i\ep)\o_m^0(z)\big).
\end{equation*}
and assume that the initial data vanish at infinity. Then, the solution $\psi^\pm_{m,\ep}(y,y_0)$  to \eqref{eq:TG}
is
\begin{equation}\label{def psi 2}
\begin{aligned}
\psi^\pm_{m,\ep}(y,y_0)&= \frac{1}{\b^2}(y-y_0\pm i\ep)\o_m^0(y) -\rho_m^0(y) + \int_{-\infty}^{+\infty} \G^\pm_{m,\ep}(y,y_0,z) H_{m,\ep}^\pm(z,y_0) \d z.
\end{aligned}
\end{equation}
and the generalized density is given by
\begin{equation}\label{def rho 2}
\rho_{m,\ep}^\pm(y,y_0)=\frac{1}{\b^2}\o_m^0(y) +\frac{1}{y-y_0\pm i\ep}\int_{-\infty}^{+\infty} {\G}_{m,\ep}^\pm(y,y_0,z) H_{m,\ep}^\pm(z,y_0)\d z.
\end{equation}

\subsection{The limiting absorption principle}
With \eqref{def psi 2} and \eqref{def rho 2} at hand, one may precisely compute the limiting absorption principle, that is, we may precisely compute \eqref{eq:LAPpsirho}
For instance, to obtain $\psi_m(t,y)$ one may compute
\begin{equation*}
\lim_{\ep\rightarrow0}\int_\R \e^{-imy_0t}\int_\R\l( \G_{m,\ep}^-(y,y_0,z)-\G_{m,\ep}^+(y,y_0,z)\r)  H_{m,\ep}^-(z,y_0) \d z \d y_0 
\end{equation*}
and note that with the change of variables $\xi=z-y_0$ and $\eta =y-y_0$, 
\begin{align*}
\int_\R \G_m^\pm(y,z)H_{m,\ep}^-(z,y_0)\d z&=-\frac{1}{2m}W(y-y_0\pm i\ep)\int_{-\infty}^y W(y_0-z\mp i\ep)H_{m,\ep}^-(z,y_0)\d z \\
&\quad-\frac{1}{2m}W(y_0-y\mp i\ep)\int_y^\infty W(z-y_0\pm i\ep)H_{m,\ep}^-(z,y_0)\d z \\
&=-\frac{1}{2m}W(\eta \pm i\ep))\int_{-\infty}^\eta  W(-\xi\mp i\ep))H_{m,\ep}^-(\xi+y-\eta ,y-\eta )\d \xi \\
&\quad-\frac{1}{2m}W(-\eta \mp i\ep)\int_\eta ^\infty W(\xi\pm i\ep)H_{m,\ep}^-(\xi+y-\eta ,y-\eta )\d \xi.
\end{align*}
Setting $G_{m,\ep}^\pm(\eta ,\xi,y):=H_{m,\ep}^\pm(\xi+y-\eta ,y-\eta )$ and $G_m:=\lim_{\ep\rightarrow 0} G_{m,\ep}^\pm=G_{m,0}^\pm$, we find that
\begin{align*}
\int_\R \l(\G_m^-(y,z)-\G_m^+(y,z)\r)&H_{m,\ep}^-(z,y_0)\d z \\
&=\frac{1}{2m}\Big( W(\eta +i\ep)-W(\eta -i\ep)\Big)\int_{-\infty}^\eta  W(-\xi-i\ep)G_{m,\ep}^-(\eta ,\xi,y)\d \xi \\
&\quad+\frac{1}{2m} W(\eta -i\ep)\int_{-\infty}^\eta  \Big( W(-\xi-i\ep)-W(-\xi+i\ep)\Big)G_{m,\ep}^-(\eta ,\xi,y)\d \xi \\
&\quad+ \frac{1}{2m}\Big( W(-\eta -i\ep)-W(-\eta +i\ep)\Big)\int_\eta ^\infty W(\xi+i\ep)G_{m,\ep}^-(\eta ,\xi,y)\d \xi \\
&\quad+\frac{1}{2m}W(-\eta +i\ep)\int_\eta ^\infty \Big( W(\xi+i\ep)-W(\xi-i\ep)\Big) G_{m,\ep}^-(\eta ,\xi,y)\d \xi.
\end{align*}
Taking the limit as $\ep$ vanishes is not trivial. Indeed, $W$ has a branch cut in the negative real axis, see Appendix \ref{app: Whittaker}, and is thus not continuous there. For this reason, we need the analytic continuation of $W$, recorded in the following lemma, whose proof is postponed to Appendix \ref{app: Whittaker}.

\begin{lemma}[Analytic continuation]\label{analytic continuation lemma}
Let $\eta \geq0$ and $0<\ep<1$. Then,
\begin{equation*}
\lim_{\ep\rightarrow0}W(-\eta +i\ep)-W(-\eta -i\ep)=2i\cos(\gamma\pi)W(\eta).
\end{equation*}
\end{lemma}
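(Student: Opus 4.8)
The plan is to reduce $W_{0,\gamma}$ to a modified Bessel function, for which the jump across the branch cut on $(-\infty,0]$ is classical, and then to translate back to the Whittaker normalisation. I would start from the identity (see \cite{NIST})
\[
W_{0,\gamma}(z)=\sqrt{z/\pi}\;K_\gamma(z/2),
\]
valid on $\C\setminus(-\infty,0]$ with principal branches for both $\sqrt{\,\cdot\,}$ and $K_\gamma$; equivalently $W(\zeta)=W_{0,\gamma}(2m\zeta)=\sqrt{2m\zeta/\pi}\,K_\gamma(m\zeta)$. If one prefers not to quote it, it is checked in two lines: the prefactor $\sqrt{z}$ removes the first-order term of the modified Bessel equation satisfied by $K_\gamma(z/2)$, so the right-hand side solves \eqref{eq:W0gamma}, and $K_\gamma(u)\sim\sqrt{\pi/(2u)}\,\e^{-u}$ matches the normalisation $W_{0,\gamma}(z)\sim\e^{-z/2}$. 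Since $\sqrt{\,\cdot\,}$ and $K_\gamma$ are analytic off $(-\infty,0]$ and extend continuously to the cut from above and from below, the limit $\ep\to0$ may be taken termwise; it therefore suffices to evaluate the jump at a fixed $\eta>0$, the endpoint $\eta=0$ following by continuity since $W(\zeta)\to0$ as $\zeta\to0$ (here $\Re\gamma<\tfrac12$ because $\b>0$).

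Fix $\eta>0$. As $\ep\to0^+$ we have $2m(-\eta\pm i\ep)\to 2m\eta\,\e^{\pm i\pi}$, so the principal square root contributes the phase $\e^{\pm i\pi/2}=\pm i$, i.e. $\sqrt{2m(-\eta\pm i\ep)/\pi}\to\pm i\sqrt{2m\eta/\pi}$, while $m(-\eta\pm i\ep)\to m\eta\,\e^{\pm i\pi}$. Inserting this into the standard continuation formula for the modified Bessel function across the cut (see \cite{NIST}),
\[
K_\gamma\!\left(z\,\e^{\pm i\pi}\right)=\e^{\mp i\gamma\pi}K_\gamma(z)\mp i\pi I_\gamma(z),\qquad z>0,\ \gamma\in\C,
\]
gives, with $u:=m\eta$,
\[
W(-\eta\pm i\ep)\ \longrightarrow\ \pm i\sqrt{\tfrac{2m\eta}{\pi}}\Big(\e^{\mp i\gamma\pi}K_\gamma(u)\mp i\pi I_\gamma(u)\Big).
\]

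Subtracting, the two $I_\gamma(u)$ contributions have opposite signs and cancel, while the $K_\gamma(u)$ contributions combine into $\big(\e^{-i\gamma\pi}+\e^{i\gamma\pi}\big)K_\gamma(u)=2\cos(\gamma\pi)K_\gamma(u)$, so that
\[
\lim_{\ep\to0}\big(W(-\eta+i\ep)-W(-\eta-i\ep)\big)=2i\cos(\gamma\pi)\sqrt{\tfrac{2m\eta}{\pi}}\,K_\gamma(m\eta)=2i\cos(\gamma\pi)\,W(\eta),
\]
which is the claim. The argument is uniform across the three regimes $\b^2<1/4$, $\b^2>1/4$, $\b^2=1/4$ (where $\gamma$ is real, purely imaginary, or $0$), since the Bessel connection formula and $\cos(\gamma\pi)$ depend analytically on $\gamma$.

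The only genuinely delicate point — and the main obstacle — is the branch bookkeeping above: that $-\eta+i\ep$ approaches $\eta\,\e^{i\pi}$ (argument $\to\pi^-$) rather than $\eta\,\e^{-i\pi}$, that the half-integer phase of $\sqrt{\,\cdot\,}$ is tracked consistently with the branch convention defining $W$ through \eqref{eq:W0gamma}, and that the boundary values of $K_\gamma$ and $I_\gamma$ from each side of the cut are precisely those entering the connection formula. A self-contained alternative, which I would mention but not prefer, avoids Bessel functions altogether: write $W_{0,\gamma}$ as the combination of the Whittaker functions $M_{0,\pm\gamma}$ (single-valued apart from the explicit prefactors $z^{\frac12\pm\gamma}$), rotate $z\mapsto z\,\e^{\pm i\pi}$ there, and apply Kummer's transformation to the confluent hypergeometric factors; the cross-terms then recombine into $2i\cos(\gamma\pi)W_{0,\gamma}$ once more. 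I would present the Bessel computation as the primary route because the single identity $K_\gamma(z\,\e^{\pm i\pi})=\e^{\mp i\gamma\pi}K_\gamma(z)\mp i\pi I_\gamma(z)$ does essentially all the work.
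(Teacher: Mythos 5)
Your proof is correct but follows a genuinely different route from the paper. The paper starts from the decomposition \eqref{eq:WtoM}, $W_{0,\gamma}=A(\gamma)M_{0,\gamma}+B(\gamma)M_{0,-\gamma}$, and uses the analytic-continuation rule $M_{0,\gamma}(\zeta\e^{\pm\pi i})=\pm i\e^{\pm\gamma\pi i}M_{0,\gamma}(\zeta)$ (a consequence of Kummer's transformation acting on the explicit prefactor $\zeta^{\frac12+\gamma}$); the cross-terms then recombine into $2i\cos(\gamma\pi)W$. This is exactly the ``self-contained alternative'' you sketched at the end and chose not to pursue. Your primary route instead rewrites $W_{0,\gamma}(z)=\sqrt{z/\pi}\,K_\gamma(z/2)$ and invokes the Bessel connection formula $K_\gamma(z\e^{\pm i\pi})=\e^{\mp i\gamma\pi}K_\gamma(z)\mp i\pi I_\gamma(z)$: the $I_\gamma$ contributions cancel in the difference, the $K_\gamma$ contributions produce $2\cos(\gamma\pi)$, and the square-root branch supplies the factor $i$. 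The computation and the branch bookkeeping check out, as does the observation that $\eta=0$ is trivial since $W(\zeta)\to 0$ as $\zeta\to 0$. What your approach buys is uniformity in $\gamma$: the decomposition \eqref{eq:WtoM} degenerates at $\gamma=0$, and indeed the paper's written proof explicitly restricts to $\b^2\neq 1/4$, relying on the separate representation \eqref{eq:WtoK} for the degenerate case without carrying out the continuation argument there; your single Bessel identity depends continuously on $\gamma$ and covers $\gamma=0$ with no modification, since $\cos(0)=1$. The small price is that you quote a connection formula for $K_\gamma$ rather than the more elementary phase computation for $M_{0,\gamma}$, but both are standard and the gain in generality is real.
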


The whole limiting procedure can be carried out rigorously and produces the explicit formulas exhibited in Theorem \ref{thm: main solution formula}. However, for the sake of brevity, we opted for showing the validity of the explicit formulas by checking they satisfy the linearized system of equations. When the equations \eqref{eq:linEulerBouss} are posed in $\T\times[0,1]$, the limiting procedure becomes much more complicated. Nevertheless, it is still possible to obtain asymptotic expansions on the resulting stream-function and density near the critical layer that  capture the same nature of the explicit formulas \eqref{eq:psim} and \eqref{eq:rhom} for the spatial setting $\T\times\R$, we refer the reader to \cite{CZN23chan}.

\section{Proof of Theorem \ref{thm:mainstrip}}\label{sec: time decay estimates}
In this section we obtain the point-wise decay rates in time for the stream function $\psi_m(t,y)$ and the density $\rho_m(t,y)$. These will be obtained as direct consequence of the following lemma, which concerns the time decay of general oscillatory integrals.
Before stating it, we introduce the following spaces of functions.
\begin{definition}
For $\delta_0>0$ we define
\begin{equation*}
X:= \l\lbrace f:[0,\infty)\times\R \rightarrow \R \text{ such that } \Vert f \Vert_{X}:= \Vert f \Vert_{L^\infty_\eta  \l( 0,\delta_0;L^2_y(\R)\r)} < \infty \r\rbrace
\end{equation*}
and also
\begin{equation*}
Y:= \l\lbrace f:[0,\infty)\times\R \rightarrow \R \text{ such that } \Vert f \Vert_{Y}:= \Vert f \Vert_{L^1_\eta  \l(\delta_0,\infty;L^2_y(\R)\r)} < \infty \r\rbrace.
\end{equation*}
\end{definition}

\begin{lemma}\label{lemma: generic decay-by-regularity}
Let $0<\alpha<1$, $\delta_0=\frac{1}{2m}$ and $t\geq1$. Let $F=F(\eta ,y):(0,\infty)\times\R\rightarrow \R$ be such that $\Vert F(\eta ,\cdot) \Vert_{L^2_y(\R)}$ vanishes as $\eta \rightarrow+\infty$ and $\p_\eta   F\in Y$. We have the following.
\begin{enumerate} [label=(\roman*), ref=(\roman*)]
\item\label{item:i}Assume $F$ admits the decomposition $F(\eta ,y)=\eta ^{-\alpha}E_1(\eta ,y)$, for some $E_1\in X$ and $\p_\eta   F(\eta ,y)=\eta ^{-\alpha-1}E_2(\eta ,y)$ for some $E_2\in X$. Then, 
\begin{equation*}
\l\Vert \int_0^\infty \e^{im\eta t}F(\eta ,y)\d \eta \r\Vert_{L^2_y(\R)} \lesssim \frac{1}{(mt)^{1-\alpha}} \l( \Vert E_1 \Vert_X + \Vert E_2 \Vert_X\r)  + \frac{1}{mt}\Vert \p_\eta   F \Vert_Y.
\end{equation*}
\item\label{item:ii} Assume $F$ admits the decomposition  $F(\eta ,y)=\eta ^{-\alpha}\l( E_{1,1}(\eta ,y) + \log(\eta )E_{1,2}(\eta ,y)\r)$ and \\ $\p_\eta   F(\eta ,y)=\eta ^{-\alpha-1}(E_{2,1}(\eta ,y)+\log(\eta )E_{2,2}(\eta ,y))$ for some $E_{i,j}\in X$, with $i,j\in \l\lbrace 1,2 \r\rbrace$.  Then,
\begin{equation*}
\l\Vert \int_0^\infty \e^{im\eta t}F(\eta ,y)\d \eta \r\Vert_{L^2_y(\R)} \lesssim  \frac{1+\log(mt)}{(mt)^{1-\alpha}} \sum_{i,j\in \l\lbrace 1,2\r\rbrace} \Vert E_{i,j} \Vert_X + \frac{1}{mt}\Vert \p_\eta   F\Vert_Y.
\end{equation*}
\end{enumerate}
\end{lemma}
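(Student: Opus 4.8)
The plan is to prove both parts by splitting the integral at the scale $\eta \sim 1/(mt)$, using the pointwise size of $F$ for the near-origin piece and an integration by parts for the tail, so that the decay rate is dictated entirely by the power $\eta^{-\alpha}$ in the decomposition hypothesis. Throughout I will work with the $L^2_y(\R)$ norm under the integral sign via Minkowski's inequality, reducing everything to scalar estimates in $\eta$ with the constants $\Vert E_1\Vert_X$, $\Vert E_2\Vert_X$ (resp. $\Vert E_{i,j}\Vert_X$) and $\Vert \p_\eta F\Vert_Y$.

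For part \ref{item:i}: write $\int_0^\infty \e^{im\eta t}F(\eta,y)\d\eta = \int_0^{1/(mt)} + \int_{1/(mt)}^\infty$. On $[0,1/(mt)]$, since $1/(mt)\leq 1 = 2m\delta_0 \cdot \tfrac{1}{2m}\cdot\tfrac{?}{}$ — more precisely since $t\geq 1$ forces $1/(mt)\leq 1/m \le 2\delta_0$ so the interval lies in the region where $E_1\in X$ controls $F$ — I bound $\Vert F(\eta,\cdot)\Vert_{L^2_y}\leq \eta^{-\alpha}\Vert E_1\Vert_X$ and integrate: $\int_0^{1/(mt)}\eta^{-\alpha}\d\eta = \tfrac{1}{1-\alpha}(mt)^{\alpha-1}$, which gives the $(mt)^{\alpha-1}\Vert E_1\Vert_X$ contribution. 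On $[1/(mt),\infty)$ I integrate by parts in $\eta$, writing $\e^{im\eta t} = \tfrac{1}{imt}\p_\eta \e^{im\eta t}$; the boundary term at $\eta = 1/(mt)$ is $\tfrac{1}{mt}\Vert F(1/(mt),\cdot)\Vert_{L^2_y}\lesssim \tfrac{1}{mt}(mt)^{\alpha}\Vert E_1\Vert_X = (mt)^{\alpha-1}\Vert E_1\Vert_X$, the boundary term at $\infty$ vanishes by the decay hypothesis on $\Vert F(\eta,\cdot)\Vert_{L^2_y}$, and the remaining integral is $\tfrac{1}{mt}\int_{1/(mt)}^\infty \e^{im\eta t}\p_\eta F(\eta,y)\d\eta$. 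For this last term I split again at $\delta_0$: on $(1/(mt),\delta_0)$ I use $\Vert \p_\eta F(\eta,\cdot)\Vert_{L^2_y}\leq \eta^{-\alpha-1}\Vert E_2\Vert_X$ and integrate $\int_{1/(mt)}^{\delta_0}\eta^{-\alpha-1}\d\eta \lesssim (mt)^{\alpha}$, contributing $\tfrac{1}{mt}(mt)^\alpha\Vert E_2\Vert_X = (mt)^{\alpha-1}\Vert E_2\Vert_X$; on $(\delta_0,\infty)$ I use $\p_\eta F\in Y$ directly to get $\tfrac{1}{mt}\Vert \p_\eta F\Vert_Y$. Collecting all pieces yields exactly the claimed bound.

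For part \ref{item:ii} the structure is identical; the only new ingredient is handling the logarithmic factor. On $[0,1/(mt)]$ I need $\int_0^{1/(mt)}\eta^{-\alpha}(1+|\log\eta|)\d\eta$, which by an elementary computation (or integration by parts in $\eta$) is $\lesssim (mt)^{\alpha-1}(1+\log(mt))$ — this is where the $1+\log(mt)$ prefactor enters. The boundary term from integration by parts at $\eta = 1/(mt)$ contributes $\tfrac{1}{mt}(mt)^\alpha(1+\log(mt))\Vert E_{1,j}\Vert_X$, again of the claimed form. The middle integral $\tfrac{1}{mt}\int_{1/(mt)}^{\delta_0}\eta^{-\alpha-1}(1+|\log\eta|)\d\eta \lesssim \tfrac{1}{mt}(mt)^\alpha(1+\log(mt))$ by the same elementary estimate (the logarithm is dominated by the power $\eta^{-\alpha}$ up to the stated factor), and the tail over $(\delta_0,\infty)$ is again $\tfrac{1}{mt}\Vert\p_\eta F\Vert_Y$. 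Summing over $i,j\in\{1,2\}$ gives the stated inequality.

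The main obstacle — really the only subtle point — is making sure the integration by parts is legitimate and that the interval $[0,1/(mt)]$ genuinely sits inside the region controlled by the $X$-norm: one must check $1/(mt)\leq \delta_0$ is \emph{not} what is needed (indeed $1/(mt)$ can exceed $\delta_0=1/(2m)$ when $t<1/2$, but the hypothesis $t\geq 1$ gives $1/(mt)\leq 1/m$), so the near-origin estimate should be set up to use the $X$-norm on $(0,\delta_0)$ and, on $(\delta_0, 1/(mt)]$ when nonempty, fold that sliver into the tail analysis instead; with $t\geq 1$ and $\delta_0 = 1/(2m)$ one has $1/(mt)\leq 1/m = 2\delta_0$, so at worst the crossover is handled by a harmless constant. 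A secondary technical point is justifying the vanishing of the boundary term at $\eta\to\infty$ and the absolute convergence needed to apply Minkowski and Fubini, both of which follow from $\p_\eta F\in Y$ together with the stated decay of $\Vert F(\eta,\cdot)\Vert_{L^2_y}$. Everything else is the elementary scalar integral $\int \eta^{-\alpha}(1+|\log\eta|)^k\d\eta$.
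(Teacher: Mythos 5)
Your proof is correct and follows essentially the same approach as the paper: split the oscillatory integral at a scale $\delta \sim 1/(mt)$, bound the near-origin piece via Minkowski and the pointwise size $\eta^{-\alpha}$ coming from the $X$-norm, and integrate by parts on the tail, with a secondary split at $\delta_0$ separating the $E_2$-controlled region from the $Y$-controlled region. The paper chooses the split point $\delta = \tfrac{1}{4mt}$ rather than $\tfrac{1}{mt}$, which automatically satisfies $\delta < \delta_0 = \tfrac{1}{2m}$ whenever $t\ge 1$ and thereby sidesteps the crossover issue you flag in your last paragraph; you instead split at $\tfrac{1}{mt}$ and have to observe that the sliver $(\delta_0, \tfrac{1}{mt}]$ (which is nonempty when $1\le t<2$, not $t<1/2$ as you wrote — a small arithmetic slip that does not affect the argument) can be folded into the tail. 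Both resolutions are valid; the paper's choice is just a cleaner bookkeeping device that makes the two splitting points nested without discussion.
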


\begin{proof}
Let $\delta\in (0,\delta_0)$ and set
\begin{equation*}
\begin{aligned}
\cI(y)=\int_0^\infty \e^{im\eta t}F(\eta ,y)\d \eta  &= \l( \int_0^\delta + \int_\delta^\infty \r) \e^{im\eta t}F(\eta ,y)\d \eta= \cI_1(y) + \cI_2(y).
\end{aligned}
\end{equation*}
\bullpar{Proof of \ref{item:i}}
We begin by estimating $\cI_1(y)$. Since we integrate in $(0,\delta)$ and $\delta\in (0,\delta_0)$, we can write $F(\eta ,y)=\eta ^{-\alpha}E_1(\eta ,y)$ and directly estimate using Minkowsky inequality
\begin{equation*}
\l\Vert \cI_1 \r\Vert_{L^2_y(\R)} \leq \Vert E_1 \Vert_X\int_0^\delta \eta ^{-\alpha}\d \eta  = \frac{\Vert E_1 \Vert_X}{1-\alpha}\delta^{1-\alpha}.
\end{equation*}
On the other hand, since $F$ vanishes at infinity, integrating by parts we can write
\begin{align*}
\cI_2 &= \frac{1}{imt}\int_\delta^\infty \p_\eta    \l( \e^{im\eta t}\r) F(\eta ,y)\d \eta  \\
&= -\frac{1}{imt} \e^{im\delta t}F(\delta,y) - \frac{1}{imt}\int_\delta^{\delta_0} \e^{im\eta t}\p_\eta   F(\eta ,y) \d \eta - \frac{1}{imt}\int_{\delta_0}^\infty \e^{im\eta t}\p_\eta   F(\eta ,y) \d \eta .
\end{align*}
and we estimate 
\begin{equation*}
\l\Vert \cI_2\r\Vert_{L^2_y(\R)} \lesssim \frac{1}{mt}\delta^{-\alpha} \l( \Vert E_1 \Vert_X  + \Vert E_2 \Vert_X\r)  +\frac{1}{mt}\Vert \p_\eta   F \Vert_Y.
\end{equation*}
Therefore, we conclude that
\begin{equation*}
\Vert \cI \Vert_{L^2_y(\R)} \lesssim  \l(\delta^{1-\alpha} + \frac{1}{mt}\delta^{-\alpha}\r)  \l( \Vert E_1 \Vert_X + \Vert E_2 \Vert_X\r) + \frac{1}{mt}\Vert \p_\eta    F \Vert_Y.
\end{equation*}
For $\delta=\frac{1}{4mt}<{\delta_0}$  we obtain the desired decay estimate
\begin{equation*}
\l\Vert \int_0^\infty \e^{im\eta t}F(\eta ,y)\d \eta \r\Vert_{L^2_y(\R)} \lesssim  \frac{1}{(mt)^{1-\alpha}} \l( \Vert E_1 \Vert_X + \Vert E_2 \Vert_X\r)  + \frac{1}{mt}\Vert \p_\eta   F \Vert_Y.
\end{equation*}
\bullpar{Proof of \ref{item:ii}} 
For $\cI_1$, since we have the expansion $F(\eta ,y)=\eta ^{-\alpha}(E_{1,1}(\eta ,y)+\log(\eta )E_{1,2}(\eta ,y))$ for $\eta \in(0,{\delta_0})$, we have that since $\delta<1$, 
\begin{equation*}
\begin{aligned}
\Vert \cI_1 \Vert_{L^2_y(\R)} &\lesssim \int_0^\delta \eta ^{-\alpha}\l(1 + |\log(\eta )|\r)\l(\Vert E_{1,1}\Vert_X + \Vert E_{1,2}\Vert_X\r) \d \eta  \\
&\lesssim \delta^{1-\alpha}\l( 1 + \big| \log\l(\delta\r) \big|\r) \l(\Vert E_{1,1}\Vert_X + \Vert E_{1,2}\Vert_X\r) .
\end{aligned}
\end{equation*}
As for $\cI_2$, integrating by parts, since $F$ vanishes at infinity and using the asymptotic expansion $\p_\eta   F(\eta ,y)=\eta ^{-\alpha-1}(E_{2,1}(\eta ,y) + \log(\eta )E_{2,2}(\eta ,y))$, one can estimate 
\begin{equation*}
\begin{aligned}
\Vert \cI_2 \Vert_{L^2_y(\R)} &\leq \frac{1}{mt}\l(  \Vert F(\delta,\cdot)\Vert_{L^2_y(\R)}+\int_\delta^{\delta_0} \Vert \p_\eta   F(\eta ,\cdot)\Vert_{L^2_y(\R)} \d \eta  + \int_{\delta_0}^\infty \Vert \p_\eta   F(\eta ,\cdot)\Vert_{L^2_y(\R)}\d \eta  \r) \\
&\lesssim \frac{1}{mt} \delta^{-\alpha} \l( 1 + \big| \log \l(\delta\r) \big| \r)  \sum_{i,j\in \l\lbrace 1,2\r\rbrace} \Vert E_{i,j} \Vert_X + \frac{1}{mt}\Vert \p_\eta   F\Vert_Y.
\end{aligned}
\end{equation*}
Choosing once again $\delta=\frac{1}{4mt}$ yields the estimate
\begin{equation*}
\l\Vert \int_0^\infty \e^{im\eta t}F(\eta ,y)\d \eta \r\Vert_{L^2_y(\R)} \lesssim \frac{1}{(mt)^{1-\alpha}} (1+\log(mt)) \sum_{i,j\in \l\lbrace 1,2\r\rbrace} \Vert E_{i,j} \Vert_X + \frac{1}{mt}\Vert \p_\eta   F\Vert_Y,
\end{equation*}
which concludes the proof.
\end{proof}
We now obtain the decay estimates for the stream-function $\psi_m$.

\begin{proposition}\label{prop: invdamp psi}
The following holds for all $t\geq 1$.
\begin{itemize}
\item If $\b^2\neq 1/4$, then
\begin{equation*}
\Vert \psi_m(t)\Vert_{L^2_y(\R)} \lesssim m^{-3}t^{-\frac32+\mu }  Q_{2,m} .
\end{equation*}
\item If $\b^2 = 1/4$, then
\begin{equation*}
\Vert \psi_m(t)\Vert_{L^2_y(\R)} \lesssim  m^{-3}t^{-\frac32}(1+\log \l(mt\r))  Q_{2,m}.
\end{equation*}
\end{itemize}
\end{proposition}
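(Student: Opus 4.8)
The plan is to apply Lemma \ref{lemma: generic decay-by-regularity} to the oscillatory integrals appearing in the explicit formula \eqref{eq:psim}. Recall from \eqref{eq:psim} that $\psi_m(t,y) = \e^{-imyt}\Psi_m(t,y)$ where $\Psi_m$ is a sum of two terms of the shape $\int_0^\infty \e^{\pm im\eta t} F^{\pm}(\eta,y)\,\d\eta$, with
\begin{equation*}
F^{+}(\eta,y) = \frac{\cos(\gamma\pi)}{2m\pi}\,W(\eta)\int_0^\infty W(\xi)\,G_m(\eta,\xi,y)\,\d\xi,
\end{equation*}
and $F^{-}$ the analogous expression with $G_m(-\eta,-\xi,y)$. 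Since the prefactor $\e^{-imyt}$ has modulus one, $\Vert\psi_m(t)\Vert_{L^2_y} = \Vert\Psi_m(t)\Vert_{L^2_y}$, so it suffices to bound each of the two oscillatory integrals in $L^2_y$. The first step is therefore to record the structure of $F^{\pm}$ near $\eta = 0$ and for $\eta$ large, splitting the $\xi$-integral at $\delta_0 = \tfrac{1}{2m}$ into a region where the near-origin asymptotics of $W$ apply and a tail region where $W(\xi)\lesssim\e^{-m\xi}$ decays.

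The core input is the asymptotic expansion of $W(\eta) = W_{0,\gamma}(2m\eta)$ as $\eta\to 0^{+}$ from Appendix \ref{app: Whittaker}: when $\gamma = \sqrt{1/4-\beta^2}$ is real and nonzero (i.e. $\beta^2 < 1/4$) or purely imaginary ($\beta^2 > 1/4$) one has $W(\eta) \sim c_1(m\eta)^{1/2-\gamma} + c_2(m\eta)^{1/2+\gamma}$, so $W(\eta) = \eta^{1/2-\mu}\tilde E(\eta)$ with $\tilde E$ bounded near the origin (here $\mu = \Re\gamma$, and when $\gamma$ is imaginary $\mu = 0$ and the two powers combine into an oscillatory-in-$\log$ bounded factor); correspondingly $W'(\eta) \sim \eta^{-1/2-\mu}$. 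When $\beta^2 = 1/4$ (so $\gamma = 0$) the expansion picks up a logarithm, $W(\eta)\sim c_1(m\eta)^{1/2} + c_2(m\eta)^{1/2}\log(m\eta)$, which is exactly the source of the $\log(mt)$ loss and the reason the two cases of Lemma \ref{lemma: generic decay-by-regularity} are needed. The second step is to feed this in: because $G_m(\eta,\xi,y) = \Delta_m\big(\rho_m^0(\xi+y-\eta) - \tfrac1{\beta^2}\xi\,\omega_m^0(\xi+y-\eta)\big)$ is a translation (in $\eta$ and $y$ simultaneously, with $(\p_\eta+\p_y)G_m = 0$), the map $\eta\mapsto \Vert G_m(\eta,\cdot,y)\Vert$ and its $\eta$-derivative are controlled, after the $\xi$-integration against $W(\xi)\in L^1(0,\infty)$, by $\Vert\rho_m^0\Vert_{H^2_y}+\Vert\omega_m^0\Vert_{H^2_y} = Q_{2,m}$ (the two derivatives in $\Delta_m$ plus the one extra $\p_\eta$ needed for the hypothesis $\p_\eta F\in Y$ eating into $H^2$; note $\p_\eta G_m$ costs one more $y$-derivative, handled by $Q_{2,m}$). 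Thus $F^{\pm}(\eta,y) = \eta^{1/2-\mu}E_1(\eta,y)$ with $\Vert E_1\Vert_X\lesssim m^{-2}Q_{2,m}$ (one $m^{-1}$ from the $\tfrac1{2m\pi}$ prefactor, one from $\Vert W(\xi)\Vert_{L^1_\xi}\lesssim m^{-1}$), and $\p_\eta F^{\pm} = \eta^{-1/2-\mu}E_2$ with $\Vert E_2\Vert_X\lesssim m^{-2}Q_{2,m}$; similarly $\Vert\p_\eta F^{\pm}\Vert_Y\lesssim m^{-2}Q_{2,m}$ using the exponential tail of $W$.

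Applying Lemma \ref{lemma: generic decay-by-regularity}\ref{item:i} with $\alpha = \tfrac12 - \mu\in(0,1)$ (valid since $0\le\mu<\tfrac12$ when $\beta^2\neq 1/4$) then gives
\begin{equation*}
\Big\Vert \int_0^\infty \e^{\pm im\eta t}F^{\pm}(\eta,y)\,\d\eta\Big\Vert_{L^2_y} \lesssim \frac{1}{(mt)^{1/2+\mu}}\,m^{-2}Q_{2,m} + \frac{1}{mt}\,m^{-2}Q_{2,m} \lesssim m^{-3}t^{-3/2+\mu}Q_{2,m},
\end{equation*}
where in the last step we used $t\ge 1$ and $\tfrac12+\mu\le\tfrac32$ to dominate the first term by the second's power (or rather, both are $\lesssim m^{-3}t^{-3/2+\mu}$ once we also exploit the extra $m$-decay; more precisely $(mt)^{-1/2-\mu}m^{-2}\le (mt)^{-3/2}m^{-2}\cdot(mt)^{1-\mu}$ — cleaner is simply to note $-1/2-\mu \le -3/2+\mu \iff \mu\ge 1/2$ is false, so one keeps the worse exponent: in fact the lemma's bound is $(mt)^{-1/2-\mu}$ which is $\le (mt)^{-3/2+\mu}$ only if $\mu\le 1/2$; here one actually obtains decay $t^{-1/2-\mu}$ from this piece but the stated $t^{-3/2+\mu}$ must come from the $\cI_2$ term — I would reconcile the exponent bookkeeping carefully at this point). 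In the case $\beta^2 = 1/4$ one instead uses part \ref{item:ii} with $\alpha = \tfrac12$, producing the extra $1+\log(mt)$ factor. The main obstacle, and where I would spend the most care, is precisely this exponent bookkeeping together with verifying the regularity transfer through $G_m$: one must check that the $\xi$-integral $\int_0^\infty W(\xi)G_m(\eta,\xi,y)\,\d\xi$ genuinely lands in the spaces $X$ and $Y$ with the claimed $y$-Sobolev norms, that differentiating in $\eta$ (which by $(\p_\eta+\p_y)G_m=0$ converts to a $y$-derivative plus a boundary-type term at $\xi = 0$ where $W'(\xi)$ is singular but integrable against the $\eta^{1/2-\mu}$ weight) does not lose more than one derivative, and that the singular factor $W'(\eta)\sim\eta^{-1/2-\mu}$ near the origin is exactly compatible with the decomposition $\p_\eta F = \eta^{-\alpha-1}E_2$ required by the lemma. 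Once these are in hand the proof is a direct substitution.
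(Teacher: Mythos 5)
There is a genuine gap, and it is precisely the point you flag but do not resolve at the end of your middle paragraph. You apply Lemma~\ref{lemma: generic decay-by-regularity} directly to $F^{+}(\eta,y)=\tfrac{\cos(\gamma\pi)}{2m\pi}\,W(\eta)\int_0^\infty W(\xi)G_m(\eta,\xi,y)\,\d\xi$, which near $\eta=0$ behaves like $\eta^{1/2-\mu}$, i.e. it \emph{vanishes}. The lemma, however, requires the decomposition $F=\eta^{-\alpha}E_1$ with $\alpha\in(0,1)$; writing $\eta^{1/2-\mu}=\eta^{-\alpha}\cdot\eta^{\alpha+1/2-\mu}$ and absorbing the extra power into $E_1$ is possible, but then the piece
\[
\frac{1}{mt}\int_{\delta}^{\delta_0}\big\|\p_\eta F(\eta,\cdot)\big\|_{L^2_y}\,\d\eta
\;\lesssim\;\frac{1}{mt}\int_{\delta}^{\delta_0}\eta^{-1/2-\mu}\,\d\eta\cdot\|E_2\|_X
\]
is bounded \emph{independently of} $\delta$ (because $-\tfrac12-\mu>-1$ when $\mu<\tfrac12$), and so no choice of $\delta$ can extract more than a factor $(mt)^{-1}$ from the $\cI_2$ contribution. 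Since $t^{-1}$ is \emph{slower} than the claimed $t^{-3/2+\mu}$ for every $\mu<\tfrac12$, the direct application of the lemma cannot produce the stated rate. Also, as written, your choice $\alpha=\tfrac12-\mu$ paired with the decomposition $F=\eta^{1/2-\mu}E_1$ does not match the lemma's hypothesis $F=\eta^{-\alpha}E_1$ except when $\mu=\tfrac12$.

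The missing step in the paper's argument is an \emph{initial integration by parts in $\eta$}: one first writes $\mathcal{T}^{+}=-\tfrac{1}{imt}\int_0^\infty\e^{im\eta t}\widetilde F(\eta,y)\,\d\eta$ with $\widetilde F=W'(\eta)\int W(\xi)G_m\,\d\xi+W(\eta)\int W(\xi)\p_\eta G_m\,\d\xi$. This buys an explicit prefactor $\tfrac1{mt}$ and, crucially, replaces $W$ by $W'\sim\eta^{-1/2-\mu}$ as the leading factor near the origin, so that $\widetilde F=\eta^{-1/2-\mu}E_1$ and $\p_\eta\widetilde F=\eta^{-3/2-\mu}E_2$ fit Lemma~\ref{lemma: generic decay-by-regularity}\ref{item:i} with $\alpha=\tfrac12+\mu$. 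The lemma then gives $(mt)^{-1/2+\mu}$ for the inner oscillatory integral, and combining with the prefactors $\tfrac{1}{2m\pi}$ and $\tfrac{1}{mt}$ yields $m^{-3}t^{-3/2+\mu}Q_{2,m}$. Your regularity accounting for $G_m$, the use of the translation identity $(\p_\eta+\p_y)G_m=0$, and the role of the logarithmic expansion in the $\b^2=\tfrac14$ case are all correct; it is only this preliminary integration by parts that is missing, and it is what closes the exponent gap you correctly identified.
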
 

\begin{proof}
We have from Theorem \ref{thm: main solution formula} that
\begin{equation*}
\begin{aligned}
\psi_m(t,y)&=\frac{\e^{-imyt}}{2m\pi}\cos(\gamma\pi) \l(\int_0^\infty \e^{im\eta t}W(\eta ) \int_0^\infty W(\xi)G_m(\eta,\xi,y)\d \xi\d \eta  \r. \\
&\qquad\qquad\qquad\qquad\qquad \l. -\int_0^\infty \e^{-im\eta t}W(\eta )\int_0^\infty W(\xi)G_m(-\eta,-\xi,y)\d \xi\d \eta  \r). 
\end{aligned}
\end{equation*}
We show the decay estimates for 
\begin{equation}\label{eq:Tpsi}
    \mathcal{T}^{+}(y):= \int_0^\infty \e^{im\eta t}W(\eta )\int_0^\infty W(\xi)G_m(\eta ,\xi, y)\d \xi\d \eta
\end{equation}
since one can directly replicate the arguments to obtain the same estimates for
\begin{equation*}
    \mathcal{T}^{-}(y):= \int_0^\infty \e^{-im\eta t}W(\eta )\int_0^\infty W(\xi)G_m(-\eta,-\xi,y)\d \xi\d \eta. 
\end{equation*}
The time decay is achieved appealing to Lemma \ref{lemma: generic decay-by-regularity}. Integrating \eqref{eq:Tpsi} by parts in $\eta$ provides
\begin{equation*}
\begin{aligned}
\mathcal{T}^+(y)&=\int_0^\infty \frac{1}{imt}\p_\eta   (\e^{im\eta t})W(\eta )\int_0^\infty W(\xi)G_m(\eta ,\xi, y)\d \xi\d \eta  \\
&=-\frac{1}{imt}\int_0^\infty \e^{im\eta t}\l( W'(\eta )\int_0^\infty W(\xi)G_m(\eta ,\xi, y)\d \xi + W(\eta )\int_0^\infty W(\xi)\p_\eta  G_m(\eta ,\xi, y)\d \xi \r)\d \eta . \\
\end{aligned}
\end{equation*}
and further define
\begin{equation}\label{eq: def F psi}
F(\eta ,y):=W'(\eta )\int_0^\infty W(\xi)G_m(\eta ,\xi, y)\d \xi + W(\eta )\int_0^\infty W(\xi)\p_\eta  G_m(\eta ,\xi, y)\d \xi.
\end{equation}
Clearly,
\begin{equation*}
\begin{aligned}
\p_\eta    F(\eta ,y)&=W''(\eta )\int_0^\infty W(\xi)G_m(\eta ,\xi, y)\d \xi + 2W'(\eta )\int_0^\infty W(\xi)\p_\eta  G_m(\eta ,\xi, y)\d \xi \\
&\quad+W(\eta )\int_0^\infty W(\xi)\p_\eta   ^2G_m(\eta ,\xi, y)\d \xi.
\end{aligned}
\end{equation*}
We begin by checking that $\p_\eta   F\in Y$. For this, \eqref{eq:Whittakereqn} yields 
\begin{equation*}
\int_{\delta_0}^\infty |W''(\eta )|\d \eta  = 4m^2 \int_{\delta_0}^\infty \l|-\frac{1}{4}+\b^2\frac{1}{(2m\eta)^2}\r| \l| W_{0,\gamma}(2m\eta)\r| \d \eta  \lesssim  m\Vert W_{0,\gamma} \Vert_{L^1}.
\end{equation*}
Similarly we easily estimate
\begin{equation*}
\int_{\delta_0}^\infty |W'(\eta )|\d \eta  \leq \Vert W' \Vert_{L^1}= \Vert W'_{0,\gamma}\Vert_{L^1}
\end{equation*}
and
\begin{equation*}
\int_{\delta_0}^\infty |W(\eta )|\d \eta  \leq \frac{1}{2m}\Vert W_{0,\gamma}\Vert_{L^1(0,\infty)}.
\end{equation*}
Moreover we have that,
\begin{equation}\label{bound int WdjG}
\begin{aligned}
\l\Vert \int_0^\infty W(\xi)\p_\eta   ^jG_m(\eta ,\xi, y)\d \xi\r\Vert_{L^2_y(\R)} &\leq \Vert W \Vert_{L^1} Q_{j,m} \leq \frac{1}{m}\Vert W_{0,\gamma} \Vert_{L^2} Q_{j,m},
\end{aligned}
\end{equation}
for all $j\geq0$.
With this, we infer that $\p_\eta   F\in Y$ and
\begin{equation*}
\Vert \p_\eta    F \Vert_Y \lesssim Q_{2,m}.
\end{equation*}
Next, we check the asymptotic expansions of $F(\eta ,y)$ and $\p_\eta   F(\eta ,y)$ for $\eta \in [0,{\delta_0}]$. For this, we will distinguish the two cases.

\bullpar{Case $\b^2\neq 1/4$}
We can use Lemma \ref{asymptotic expansion W} to write
\begin{equation*}
W(\eta )=\eta ^{\frac12-\mu} \mathcal{E}_{m,2}(\eta ),\qquad W'(\eta )=\eta ^{-\frac12-\mu} \mathcal{E}_{m,1}(\eta ), \qquad W''(\eta )=\eta ^{-\frac32-\mu} \mathcal{E}_{m,2}(\eta ),
\end{equation*}
which yields
\begin{equation*}
\begin{aligned}
F(\eta ,y)&=\eta ^{-\frac12-\mu}\mathcal{E}_{m,1}(\eta )\int_0^\infty W(\xi)G_m(\eta ,\xi, y)\d \xi  +\eta ^{\frac12+\mu}\mathcal{E}_{m,0}(\eta )\int_0^\infty W(\xi)\p_\eta  G_m(\eta ,\xi, y)\d \xi  \\
&=\eta ^{-\frac12 -\mu}E_1(\eta,y ),
\end{aligned}
\end{equation*}
with $\Vert E_1 \Vert_X\lesssim m^{-\frac12-\mu} Q_{1,m}$ and 
\begin{equation*}
\begin{aligned}
\p_\eta    F(\eta ,y)&=\eta ^{-\frac32-\mu}\mathcal{E}_{m,2}(\eta )\int_0^\infty W(\xi)G_m(\eta ,\xi, y)\d \xi + 2u^{-\frac12-\mu}\mathcal{E}_{m,1}(\eta )\int_0^\infty W(\xi)\p_\eta  G_m(\eta ,\xi, y)\d \xi \\
&\qquad\qquad+ \eta ^{\frac12-\mu}\mathcal{E}_{m,0}(\eta )\int_0^\infty W(\xi)\p_\eta   ^2G_m(\eta ,\xi,y )\d \xi  \\
&=\eta ^{-\frac32-\mu}E_2(\eta, y),
\end{aligned}
\end{equation*}
with $\Vert E_2 \Vert_X \lesssim m^{-\frac12-\mu} Q_{2,m}$. With this, for $\alpha=\frac{1}{2}+\mu$,  we show that $F(\eta ,y)$ defined above satisfies the conditions of Lemma \ref{lemma: generic decay-by-regularity} and we conclude that
\begin{equation*}
\begin{aligned}
\l\Vert \int_0^\infty \e^{im\eta t}F(\eta ,y) \d \eta \r\Vert_{L^2_y(\R)} &\lesssim \frac{1}{mt^{1-\alpha}}  Q_{2,m},
\end{aligned}
\end{equation*}
which yields the claimed bound for $\Vert \psi_m(t)\Vert_{L^2_y(\R)}$.

\bullpar{Case $\b^2=1/4$} We shall now use the asymptotic expansions of Lemma \ref{asymptotic expansion special W}. We will use these expansions to check the validity of the hypothesis required to apply Lemma \ref{lemma: generic decay-by-regularity}. In this direction, for $\eta \in (0,{\delta_0})$  we can write
\begin{equation*}
\begin{aligned}
F(\eta ,y)&=\eta ^{-\frac12}\l[\mathcal{E}_{m,1,1}(\eta )+\log(\eta )\mathcal{E}_{m,1,2}(\eta ) \r]\int_0^\infty W(\xi)G_m(\eta ,\xi, y)\d \xi  \\
&\quad+\eta ^{\frac12}\l[\mathcal{E}_{m,0,1}(\eta )+\log(\eta )\mathcal{E}_{m,0,2}(\eta )\r]\int_0^\infty W(\xi)\p_\eta  G_m(\eta ,\xi, y)\d \xi  \\
&=\eta ^{-\frac12}(E_{1,1}(\eta ,y)+\log(\eta )E_{1,2}(\eta ,y)),
\end{aligned}
\end{equation*}
with the uniform bounds
\begin{align*}
\Vert E_{1,1}\Vert_X\lesssim m^{-\frac12}\l( 1 + \log \l(m\r) \r) Q_{1,m}, \quad
\Vert E_{1,2}\Vert_X \lesssim m^{-\frac12} Q_{1,m}.
\end{align*}
Similarly, for $\p_\eta   F(\eta ,y)$ we can write
\begin{equation*}
\begin{aligned}
\p_\eta   F(\eta ,y)&=\eta ^{-\frac32}\l[\mathcal{E}_{m,2,1}(\eta ) +\log(\eta ) \mathcal{E}_{m,2,2}(\eta )\r]\int_0^\infty W(\xi)G_m(\eta ,\xi, y)\d \xi \\
&\quad+ 2u^{-\frac12}\l[\mathcal{E}_{m,1,1}(\eta )+\log(\eta )\mathcal{E}_{m,1,2}(\eta ) \r]\int_0^\infty W(\xi)\p_\eta  G_m(\eta ,\xi, y)\d \xi \\
&\quad+ \eta ^{\frac12}\l[\mathcal{E}_{m,0,1}(\eta )+\log(\eta )\mathcal{E}_{m,0,2}(\eta )\r]\int_0^\infty W(\xi)\p_\eta   ^2G_m(\eta ,v,\eta )\d \xi  \\
&=\eta ^{-\frac32}\big(E_{2,1}(\eta ,y)+\log(\eta )E_{2,2}(\eta ,y)\big),
\end{aligned}
\end{equation*}
with the bounds 
\begin{align*}
\Vert E_{2,1}\Vert_X &\lesssim m^{-\frac12}\l( 1 + \log \l(m\r) \r) Q_{2,m}, \quad \Vert E_{2,2}\Vert_X \lesssim m^{-\frac12} Q_{2,m}.
\end{align*}
Hence, we apply Lemma \ref{lemma: generic decay-by-regularity} for $\alpha=\frac12$ and $\delta=\frac{1}{4mt}$ to obtain
\begin{equation*}
\begin{aligned}
\l\Vert  \int_0^\infty \e^{im\eta t}F(\eta ,y)\d \eta \r\Vert_{L^2_y(\R)} &\lesssim \frac{1}{(mt)^{\frac12}}(1+\log \l(mt\r) )  \sum_{i,j\in \l\lbrace 1,2\r\rbrace} \Vert E_{i,j} \Vert_X + \frac{1}{mt}\Vert \p_\eta    F\Vert_Y\\ 
&\lesssim \frac{1}{mt^{\frac12}}(1+\log \l(mt\r) ) Q_{2,m}.
\end{aligned}
\end{equation*}
From here, the stated bound for $\Vert \psi_m(t)\Vert_{L^2_y(\R)}$ follows easily.
\end{proof}
From the explicit expression of $\psi_m(t,y)$ and replicating the proof of Proposition \ref{prop: invdamp psi}, one obtains the following result.

\begin{corollary}\label{cor: invdamp dy psi}
The following holds for all $t\geq 1$.
\begin{itemize}
\item If $\b^2\neq 1/4$, then
\begin{equation*}
\Vert \p_y\psi_m(t)\Vert_{L^2_y(\R)}\lesssim  m^{-2}t^{-\frac12+\mu} Q_{1,m}.
\end{equation*}
\item If $\b^2 = 1/4$, then
\begin{equation*}
\Vert \p_y \psi_m(t)\Vert \lesssim  m^{-2}t^{-\frac12}\l( 1 + \log \l(mt\r) \r) Q_{1,m}.
\end{equation*}
\end{itemize}
\end{corollary}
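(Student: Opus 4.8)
I would differentiate the formula of Theorem~\ref{thm: main solution formula} in $y$ and recognise that $\p_y\psi_m$ has the same structure as $\psi_m$, only with the outer Whittaker factor $W(\eta)$ replaced by $W'(\eta)$. Writing $\psi_m(t,y)=\frac{\e^{-imyt}\cos(\gamma\pi)}{2m\pi}\l(\mathcal{T}^+(y)-\mathcal{T}^-(y)\r)$ with $\mathcal{T}^+$ as in \eqref{eq:Tpsi} and $\mathcal{T}^-$ its analogue built from $\e^{-im\eta t}$ and $G_m(-\eta,-\xi,y)$, we have
\begin{equation*}
\p_y\psi_m=\frac{\e^{-imyt}\cos(\gamma\pi)}{2m\pi}\Big(-imt\l(\mathcal{T}^+-\mathcal{T}^-\r)+\l(\p_y\mathcal{T}^+-\p_y\mathcal{T}^-\r)\Big).
\end{equation*}
As already exploited in Section~\ref{sec: proof thm sol}, $G_m(\eta,\xi,y)$ depends on $(\eta,y)$ only through $\xi+y-\eta$, so $(\p_y+\p_\eta)G_m(\eta,\xi,y)=0$ and $(\p_y-\p_\eta)G_m(-\eta,-\xi,y)=0$. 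Rewriting $-imt\,\mathcal{T}^+=-\int_0^\infty\big(\p_\eta\e^{im\eta t}\big)W(\eta)\int_0^\infty W(\xi)G_m(\eta,\xi,y)\,\d\xi\,\d\eta$ and integrating by parts in $\eta$ (the boundary terms vanish because $W(0)=0$ and $W$ decays exponentially), the term carrying $W(\eta)\p_\eta G_m$ cancels exactly against $\p_y\mathcal{T}^+=-\int_0^\infty\e^{im\eta t}W(\eta)\int_0^\infty W(\xi)\p_\eta G_m(\eta,\xi,y)\,\d\xi\,\d\eta$, and the same happens in the $\mathcal{T}^-$ term. What is left is
\begin{equation*}
\p_y\psi_m(t,y)=\frac{\e^{-imyt}\cos(\gamma\pi)}{2m\pi}\l(\int_0^\infty\e^{im\eta t}W'(\eta)\int_0^\infty W(\xi)G_m(\eta,\xi,y)\,\d\xi\,\d\eta-\int_0^\infty\e^{-im\eta t}W'(\eta)\int_0^\infty W(\xi)G_m(-\eta,-\xi,y)\,\d\xi\,\d\eta\r).
\end{equation*}

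From here the argument copies the proof of Proposition~\ref{prop: invdamp psi} almost verbatim, with $F(\eta,y):=W'(\eta)\int_0^\infty W(\xi)G_m(\eta,\xi,y)\,\d\xi$ playing the role of the $F$ used there. First one checks that $\Vert F(\eta,\cdot)\Vert_{L^2_y(\R)}$ vanishes as $\eta\to\infty$ and that $\p_\eta F=W''(\eta)\int_0^\infty W(\xi)G_m\,\d\xi+W'(\eta)\int_0^\infty W(\xi)\p_\eta G_m\,\d\xi$ belongs to $Y$: indeed \eqref{eq:Whittakereqn} gives $|W''(\eta)|\lesssim(m^2+\b^2\eta^{-2})|W_{0,\gamma}(2m\eta)|$, so $\int_{\delta_0}^\infty|W''|\lesssim m\Vert W_{0,\gamma}\Vert_{L^1}$ with $\delta_0=\tfrac{1}{2m}$, while $\Vert W'\Vert_{L^1}=\Vert W_{0,\gamma}'\Vert_{L^1}$, and together with \eqref{bound int WdjG} this yields $\Vert\p_\eta F\Vert_Y\lesssim Q_{1,m}$ --- note that only the first $y$-derivative of the data enters, through $\p_\eta G_m$, which is exactly why $Q_{1,m}$ and not $Q_{2,m}$ shows up. For $\b^2\neq1/4$, Lemma~\ref{asymptotic expansion W} gives $W'(\eta)=\eta^{-\frac12-\mu}\mathcal{E}_{m,1}(\eta)$ and $W''(\eta)=\eta^{-\frac32-\mu}\mathcal{E}_{m,2}(\eta)$, hence $F=\eta^{-(\frac12+\mu)}E_1$ and $\p_\eta F=\eta^{-(\frac32+\mu)}E_2$ with $\Vert E_1\Vert_X,\Vert E_2\Vert_X\lesssim m^{-\frac12-\mu}Q_{1,m}$, and Lemma~\ref{lemma: generic decay-by-regularity}\ref{item:i} with $\alpha=\frac12+\mu$ then gives $\l\Vert\int_0^\infty\e^{im\eta t}F\,\d\eta\r\Vert_{L^2_y(\R)}\lesssim m^{-1}t^{-\frac12+\mu}Q_{1,m}$ for $t\geq1$; dividing by $m$ (the $\mathcal{T}^-$ piece being handled identically via $(\p_y-\p_\eta)G_m(-\eta,-\xi,y)=0$) gives the first estimate. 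For $\b^2=1/4$ one uses the expansions of Lemma~\ref{asymptotic expansion special W}, writing $F=\eta^{-\frac12}(E_{1,1}+\log(\eta)E_{1,2})$ and $\p_\eta F=\eta^{-\frac32}(E_{2,1}+\log(\eta)E_{2,2})$, and applies Lemma~\ref{lemma: generic decay-by-regularity}\ref{item:ii} with $\alpha=\frac12$, producing the extra $1+\log(mt)$ factor and hence the second estimate.

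The one genuinely delicate point is the cancellation in the first step. The naive route $\p_y\psi_m=-imt\,\psi_m+\e^{-imyt}\p_y\Psi_m$ forces an estimate on $mt\,\psi_m$, and through Proposition~\ref{prop: invdamp psi} this costs $Q_{2,m}$ --- one derivative more than claimed --- even though the corresponding rate $mt\cdot m^{-3}t^{-\frac32+\mu}=m^{-2}t^{-\frac12+\mu}$ is already optimal. Trading the $\p_y$ that falls on $\e^{-imyt}$ for an integration by parts that deposits a single derivative on the exponentially decaying profile $W$, rather than on the data through a second power of $t$, is precisely what keeps the regularity count at $Q_{1,m}$ while retaining the optimal decay. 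Every remaining step --- the membership $\p_\eta F\in Y$, the two near-origin expansions, and the final invocation of Lemma~\ref{lemma: generic decay-by-regularity} --- then parallels the corresponding step in the proof of Proposition~\ref{prop: invdamp psi}, with the exponent of $\eta$ shifted up by one.
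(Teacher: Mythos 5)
Your proposal is correct and follows the same route as the paper: express $-imt\,\psi_m$ via an integration by parts in $\eta$, invoke $(\p_\eta+\p_y)G_m=0$ to cancel the resulting $W(\eta)\p_\eta G_m$ term against $\p_y\mathcal{T}^\pm$, and then apply Lemma~\ref{lemma: generic decay-by-regularity} to the $W'(\eta)$-weighted oscillatory integral exactly as in Proposition~\ref{prop: invdamp psi} but with one fewer $\eta$-derivative landing on $G_m$, which is what brings the count down from $Q_{2,m}$ to $Q_{1,m}$. You also supply details the paper elides (the boundary terms, the explicit bounds on $\Vert E_j\Vert_X$ and $\Vert\p_\eta F\Vert_Y$), and they check out. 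One small point in your favour: the paper's final displayed formula for $\p_y\psi_m$ retains a stray $\p_y$ on $G_m$ in the inner integral, but after the cancellation the integrand should be $G_m(\eta,\xi,y)$ as you wrote it (otherwise $\p_\eta F$ would involve $\p_\eta\p_y G_m$ and the stated $Q_{1,m}$ bound would fail), so your version is the internally consistent one.
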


\begin{proof}
Note that 
\begin{align*}
\p_y\psi_m(t,y) &= -imt\psi_m(t,y) + \frac{\e^{-imyt}}{2m\pi}\cos(\gamma\pi) \l(\int_0^\infty \e^{im\eta t}W(\eta ) \int_0^\infty W(\xi)\p_yG_m(\eta,\xi,y)\d \xi\d \eta  \r. \\
&\qquad\qquad\qquad\qquad\qquad \l. -\int_0^\infty \e^{-im\eta t}W(\eta )\int_0^\infty W(\xi)\p_yG_m(-\eta,-\xi,y)\d \xi\d \eta  \r). 
\end{align*}
In particular, we observe that
\begin{align*}
-imt &\frac{\e^{-imyt}}{2m\pi}\cos (\mu\pi)\int_0^\infty \e^{im\eta t}W(\eta )\int_0^\infty W(\xi)G_m(\eta ,\xi, y) \d \xi \d \eta \\
&= -\frac{\e^{-imyt}}{2m\pi}\cos (\mu\pi)\int_0^\infty \p_\eta   \l(\e^{im\eta t}\r)W(\eta )\int_0^\infty W(\xi)G_m(\eta ,\xi, y) \d \xi \d \eta  \\
&= \frac{\e^{-imyt}}{2m\pi}\cos (\mu\pi)\int_0^\infty \e^{im\eta t}W'(\eta )\int_0^\infty W(\xi)G_m(\eta ,\xi, y) \d \xi \d \eta  \\
&\quad + \frac{\e^{-imyt}}{2m\pi}\cos (\mu\pi)\int_0^\infty \e^{im\eta t}W(\eta )\int_0^\infty W(\xi)\p_\eta    G_m(\eta ,\xi, y) \d \xi \d \eta .
\end{align*}
Under the observation that $(\p_\eta    + \p_y)\G_m(\eta ,\xi, y)=0$, we conclude that
\begin{align*}
\p_y\psi_m(t,y) &=\frac{\e^{-imyt}}{2m\pi}\cos(\gamma\pi) \l(\int_0^\infty \e^{im\eta t}W'(\eta ) \int_0^\infty W(\xi)\p_yG_m(\eta,\xi,y)\d \xi\d \eta  \r. \\
&\qquad\qquad\qquad\qquad\qquad \l. -\int_0^\infty \e^{-im\eta t}W'(\eta )\int_0^\infty W(\xi)\p_yG_m(-\eta,-\xi,y)\d \xi\d \eta  \r). 
\end{align*}
and the corollary follows applying Lemma \ref{lemma: generic decay-by-regularity}, we omit the details.
\end{proof}

We now obtain the decay in time of the perturbed density.
\begin{proposition}
The following holds for all $t\geq 1$.
\begin{itemize}
\item If $\b^2\neq 1/4$, then
\begin{equation*}
\Vert \rho_m(t)\Vert_{L^2_y(\R)}\lesssim  m^{-1}t^{-\frac12+\mu} Q_{1,m}.
\end{equation*}
\item If $\b^2 = 1/4$, then
\begin{equation*}
\Vert \rho_m(t)\Vert_{L^2_y(\R)}\lesssim  m^{-2}t^{-\frac12}\l( 1 + \log \l(mt\r) \r) Q_{1,m}.
\end{equation*}
\end{itemize}
\end{proposition}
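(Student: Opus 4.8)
The plan is to prove this exactly as Proposition \ref{prop: invdamp psi}, now starting from the explicit formula for $\rho_m$ in Theorem \ref{thm: main solution formula}. I would first write $\rho_m(t,y)=\frac{\e^{-imyt}}{2m\pi}\cos(\gamma\pi)\lb\mathcal{S}^+(y)+\mathcal{S}^-(y)\rb$, where
\[
\mathcal{S}^+(y):=\int_0^\infty \e^{im\eta t}\frac{W(\eta)}{\eta}\int_0^\infty W(\xi)G_m(\eta,\xi,y)\,\d\xi\,\d\eta
\]
and $\mathcal{S}^-$ is the analogous term with $\e^{-im\eta t}$ and $G_m(-\eta,-\xi,y)$; as in Proposition \ref{prop: invdamp psi} it is enough to estimate $\mathcal{S}^+$. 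The one structural change compared with the stream function is the extra weight $\eta^{-1}$, and this is really the only point to get right: for $\psi_m$ one integrates by parts once in $\eta$, which simultaneously generates a $W'$ (hence the singular behaviour $\eta^{-1/2-\mu}$ at the origin) and gains a factor $(mt)^{-1}$, whereas here the weight $W(\eta)/\eta$ is \emph{already} singular of exactly that type, so I would apply Lemma \ref{lemma: generic decay-by-regularity} \emph{directly}, with no preliminary integration by parts. This is precisely why $\rho_m$ decays one power of $t$ more slowly than $\psi_m$ and why only $Q_{1,m}$ rather than $Q_{2,m}$ regularity enters.

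Concretely, set $F(\eta,y):=\frac{W(\eta)}{\eta}\int_0^\infty W(\xi)G_m(\eta,\xi,y)\,\d\xi$, so that $\mathcal{S}^+(y)=\int_0^\infty\e^{im\eta t}F(\eta,y)\,\d\eta$, and verify the standing hypotheses of Lemma \ref{lemma: generic decay-by-regularity}. From
\[
\p_\eta F(\eta,y)=\Big(\frac{W'(\eta)}{\eta}-\frac{W(\eta)}{\eta^2}\Big)\int_0^\infty W(\xi)G_m(\eta,\xi,y)\,\d\xi+\frac{W(\eta)}{\eta}\int_0^\infty W(\xi)\p_\eta G_m(\eta,\xi,y)\,\d\xi,
\]
the exponential decay of $W$ and $W'$, the bound $1/\eta\le 2m$ on $(\delta_0,\infty)$, and \eqref{bound int WdjG} with $j=0,1$, one obtains $\Vert F(\eta,\cdot)\Vert_{L^2_y(\R)}\to 0$ as $\eta\to\infty$ and $\p_\eta F\in Y$ with $\Vert\p_\eta F\Vert_Y\lesssim Q_{1,m}$.

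Next I would read off the behaviour on $(0,\delta_0)$. When $\b^2\neq 1/4$, Lemma \ref{asymptotic expansion W} gives $W(\eta)=\eta^{\frac12-\mu}\mathcal{E}_{m,0}(\eta)$ and $W'(\eta)=\eta^{-\frac12-\mu}\mathcal{E}_{m,1}(\eta)$, so $\frac{W(\eta)}{\eta}=\eta^{-\frac12-\mu}\mathcal{E}_{m,0}(\eta)$ and, combining with \eqref{bound int WdjG} exactly as in Proposition \ref{prop: invdamp psi}, $F(\eta,y)=\eta^{-\frac12-\mu}E_1(\eta,y)$ and $\p_\eta F(\eta,y)=\eta^{-\frac32-\mu}E_2(\eta,y)$ with $E_1,E_2\in X$ and $\Vert E_1\Vert_X+\Vert E_2\Vert_X\lesssim m^{-\frac12-\mu}Q_{1,m}$; here one notes that the most singular, $\eta^{-\frac32-\mu}$, contribution to $\p_\eta F$ carries the factor $\int_0^\infty W(\xi)G_m\,\d\xi$, which is why $Q_{1,m}$ suffices, and that it cannot be cancelled (its two pieces enter with coefficients $\tfrac12-\mu$ and $-1$). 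Lemma \ref{lemma: generic decay-by-regularity}\ref{item:i} with $\alpha=\tfrac12+\mu\in(0,1)$ and $\delta=\tfrac{1}{4mt}$ then gives, for $t\ge1$,
\[
\Vert\mathcal{S}^+\Vert_{L^2_y(\R)}\lesssim (mt)^{-\frac12+\mu}m^{-\frac12-\mu}Q_{1,m}+(mt)^{-1}Q_{1,m}\lesssim m^{-1}t^{-\frac12+\mu}Q_{1,m},
\]
and multiplying by the prefactor $\tfrac{1}{2m\pi}$ (and using $m\ge1$) yields the claimed bound. When $\b^2=1/4$ I would instead use Lemma \ref{asymptotic expansion special W}, which introduces the logarithm: $\frac{W(\eta)}{\eta}=\eta^{-\frac12}\big(\mathcal{E}_{m,0,1}(\eta)+\log(\eta)\mathcal{E}_{m,0,2}(\eta)\big)$, hence $F=\eta^{-\frac12}(E_{1,1}+\log(\eta)E_{1,2})$ and $\p_\eta F=\eta^{-\frac32}(E_{2,1}+\log(\eta)E_{2,2})$ with the $E_{i,j}\in X$ bounded as in the corresponding case of Proposition \ref{prop: invdamp psi}; Lemma \ref{lemma: generic decay-by-regularity}\ref{item:ii} with $\alpha=\tfrac12$ then gives $\Vert\mathcal{S}^+\Vert_{L^2_y(\R)}\lesssim m^{-1}t^{-\frac12}(1+\log(mt))Q_{1,m}$, and multiplying by $\tfrac{1}{2m\pi}$ concludes.

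The only genuinely delicate step, and the one I would expect to be the main obstacle, is checking that the singular weight $W(\eta)/\eta$ fits the hypotheses of Lemma \ref{lemma: generic decay-by-regularity} \emph{without} an integration by parts — precisely, that $\p_\eta F$ is no more singular than $\eta^{-\alpha-1}$ near the origin, which relies on the Whittaker asymptotics producing for $W(\eta)/\eta$ the exact exponent $-\tfrac12-\mu$ (resp.\ $-\tfrac12$ together with a logarithm when $\b^2=1/4$) with no worse terms. Everything else is a routine repetition of the corresponding steps in the proof of Proposition \ref{prop: invdamp psi}, which I would not reproduce in detail.
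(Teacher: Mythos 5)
Your proposal follows the paper's proof essentially verbatim: same explicit formula for $\rho_m$, same definition of $F(\eta,y)=\tfrac{W(\eta)}{\eta}\int_0^\infty W(\xi)G_m\,\d\xi$, same direct application of Lemma \ref{lemma: generic decay-by-regularity} without a preliminary integration by parts, same Whittaker asymptotics for the two cases $\b^2\neq 1/4$ and $\b^2=1/4$, and the same $Y$-bound on $\p_\eta F$. The only addition is your correct side-remark that the leading $\eta^{-3/2-\mu}$ coefficient $(\tfrac12-\mu)-1=-\tfrac12-\mu$ does not cancel, and the explicit tracking of the $\tfrac{1}{2m\pi}$ prefactor (which, incidentally, suggests the stated $m^{-1}$ power in the $\b^2\neq 1/4$ case should read $m^{-2}$, matching the other case); neither changes the route, which is the same as the paper's.
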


\begin{proof}
From Theorem \ref{thm: main solution formula},
\begin{equation*}
\begin{aligned}
\rho_m(t,y)&=\frac{\e^{-imyt}}{2m\pi}\cos(\gamma\pi) \l(\int_0^\infty \e^{im\eta t}\frac{W(\eta )}{\eta } \int_0^\infty W(\xi)G_m(\eta,\xi,y)\d \xi\d \eta  \r. \\
&\qquad\qquad\qquad\qquad\qquad \l. +\int_0^\infty \e^{-im\eta t}\frac{W(\eta )}{\eta }\int_0^\infty W(\xi)G_m(-\eta,-\xi,y)\d \xi\d \eta  \r). 
\end{aligned}
\end{equation*}
As before, we only show the decay estimate for $$
\mathcal{T}:=\int_0^\infty \e^{im\eta t}\frac{W(\eta )}{\eta } \int_0^\infty W(\xi)G_m(\eta,\xi,y)\d \xi\d \eta. 
$$ Denoting
\begin{equation*}
F(\eta ,y)=\frac{W(\eta )}{\eta }\int_0^\infty W(\xi)G_m(\eta ,\xi, y)\d \xi \d \eta ,
\end{equation*} 
we shall apply Lemma \ref{lemma: generic decay-by-regularity}. We compute 
\begin{equation*}
\p_\eta F(\eta ,y)=\l(\frac{W'(\eta )}{\eta }-\frac{W(\eta )}{\eta ^2}\r)\int_0^\infty W(\xi)G_m(\eta ,\xi, y)\d \xi + \frac{W(\eta )}{\eta }\int_0^\infty W(\xi)\p_\eta  G_m(\eta ,\xi, y) \d \xi
\end{equation*}
and we observe the following bounds:
\begin{align*}
\int_{\delta_0}^\infty \l|\frac{W'(\eta )}{\eta }\r|\d \eta  &\leq {\delta_0}^{-1}\Vert W' \Vert_{L^1({\delta_0},\infty)}={\delta_0}^{-1}\Vert W_{0,\gamma}' \Vert_{L^1(1,\infty)} \\
\int_{\delta_0}^\infty \l|\frac{W(\eta )}{\eta ^2}\r|\d \eta  &\leq {\delta_0}^{-2}\Vert W \Vert_{L^1({\delta_0},\infty)}={\delta_0}^{-1}\Vert W_{0,\gamma} \Vert_{L^1(1,\infty)} \\
\int_{\delta_0}^\infty \l|\frac{W(\eta )}{\eta }\r|\d \eta  &\leq {\delta_0}^{-1}\Vert W \Vert_{L^1({\delta_0},\infty)}=\Vert W_{0,\gamma} \Vert_{L^1(1,\infty)} 
\end{align*}
since ${\delta_0}^{-1}=2m$. Together with \eqref{bound int WdjG} we deduce that $\p_\eta    F\in Y$ and we can estimate 
\begin{equation*}
\Vert \p_\eta    F \Vert_Y \lesssim Q_{1,m}.
\end{equation*} 
We next treat each $\b^2$ case separately to obtain the correct asymptotic expansions.

\bullpar{Case $\b^2\neq1/4$} Following the asymptotic expansions of Lemma \ref{asymptotic expansion W}, we can write
\begin{equation*}
F(\eta ,y)=\eta ^{-\frac12-\mu}\mathcal{E}_{m,0}(\eta )\int_0^\infty W(\xi)G_m(\eta ,\xi, y)\d \xi=\eta ^{-\frac12-\mu}E_{1}(\eta ,y),
\end{equation*}
with $\Vert E_1 \Vert_X \lesssim m^{-\frac12-\mu} Q_{0,m}$. Similarly, we have that
\begin{equation*}
\begin{aligned}
\p_\eta    F(\eta ,y)&=\l( \eta ^{-\frac32-\mu}\mathcal{E}_{m,1}(\eta )-\eta ^{-\frac32-\mu}\mathcal{E}_{m,0}(\eta )\r)\int_0^\infty W(\xi)G_m(\eta ,\xi, y)\d \xi \\
&\qquad+ \eta ^{-\frac12}\mathcal{E}_{m,0}(\eta )\int_0^\infty W(\xi)\p_\eta  G_m(\eta ,\xi, y)\d \xi \\
&= \eta ^{-\frac32-\mu}E_2(\eta ,y),
\end{aligned}
\end{equation*}
where $\Vert E_2 \Vert_X \lesssim m^{-\frac12-\mu} Q_{1,m}$. Hence, taking $\alpha=\frac12+\mu$ we apply Lemma \ref{lemma: generic decay-by-regularity} swiftly and obtain the decay estimate
\begin{equation*}
\l\Vert  \int_0^\infty \e^{im\eta t}F(\eta ,y)\d \eta \r\Vert_{L^2_y(\R)} \lesssim  m^{-1}t^{-\frac12+\mu} Q_{1,m},
\end{equation*}
from which the proof follows.

\bullpar{Case $\b^2=1/4$} Thanks to the asymptotic expansions of Lemma \ref{asymptotic expansion W}, we have
\begin{equation*}
\begin{aligned}
F(\eta ,y)&=\eta ^{-\frac12}\big(\mathcal{E}_{m,0,1}(\eta )+\log(\eta )\mathcal{E}_{m,0,2}(\eta )\big)\int_0^\infty W(\xi)G_m(\eta ,\xi, y)\d \xi \\
&=\eta ^{-\frac12}\big( E_{1,1}(\eta ,y)+\log(\eta )E_{1,2}(\eta ,y)\big),
\end{aligned}
\end{equation*}
with the bounds
\begin{align*}
\Vert E_{1,1}\Vert_X &\lesssim m^{-\frac12}\l( 1 + \log \l(m\r) \r) Q_{0,m}, \quad \Vert E_{1,2}\Vert_X \lesssim m^{-\frac12}  Q_{0,m}.
\end{align*}
As for $\p_\eta    F$, we have that
\begin{equation*}
\begin{aligned}
\p_\eta   F(\eta ,y)&=  \eta ^{-\frac32}\big(\mathcal{E}_{m,1,1}(\eta ) + \log(\eta )\mathcal{E}_{m,1,2}(\eta )\big)\int_0^\infty W(\xi)G_m(\eta ,\xi, y)\d \xi \\
 &\quad- \eta ^{-\frac32}\big(\mathcal{E}_{m,0,1}(\eta )+\log(\eta )\mathcal{E}_{m,0,2}(\eta )\big)\int_0^\infty W(\xi)G_m(\eta ,\xi, y)\d \xi \\
 &\quad+ \eta ^{-\frac12}\big(\mathcal{E}_{m,0,1}(\eta )+\log(\eta )\mathcal{E}_{m,0,2}(\eta )\big)\int_0^\infty W(\xi)\p_\eta  G_m(\eta ,\xi, y)\d \xi \\
&=\eta ^{-\frac32}\big( E_{2,1}(\eta ,y) + \log(\eta )E_{2,2}(\eta ,y)\big),
\end{aligned}
\end{equation*}
where we can bound
\begin{align*}
\Vert E_{2,1}\Vert_X &\lesssim m^{-\frac12} \l( 1 + \log \l(m\r) \r) Q_{1,m}, \quad \Vert E_{2,2}\Vert_X \lesssim  m^{-\frac12} Q_{1,m}.
\end{align*}
Now, for $\alpha=1/2$, we have that
\begin{equation*}
\l\Vert \int_0^\infty \e^{im\eta t}F(\eta ,y) \d \eta  \r\Vert_{L^2_y(\R)} \lesssim  m^{-1}t^{-\frac12}\l( 1 + \log \l(mt\r) \r) Q_{1,m}
\end{equation*}
due to Lemma \ref{lemma: generic decay-by-regularity}. With this, the proof is complete.
\end{proof}

\appendix

\section{The Whittaker functions}\label{app: Whittaker}
Here we give a description of the Whittaker function $W_{0,\gamma}$ and its asymptotic expansions. For $\mu=\Re\l( \sqrt{1/4 -\b^2}\r)$ and $\nu=\Im\l( \sqrt{1/4 - \b^2} \r)$ we set $\gamma=\mu + i\nu$. For $\gamma\neq 0$ and $\zeta\in\C$, the Whittaker function $W_{0,\gamma}(\zeta)$ is given by
\begin{equation}\label{eq:WtoM}
\begin{split}
W_{0,\gamma}(\zeta)=\frac{\Gamma(-2\gamma)}{\Gamma(\frac{1}{2}-\gamma)}M_{0,\gamma}(\zeta)+\frac{\Gamma(2\gamma)}{\Gamma(\frac{1}{2}+\gamma)}M_{0,-\gamma}(\zeta) 
\end{split}
\end{equation}
Here, $\Gamma(\cdot)$ stands for the Gamma function and the Whittaker functions $M_{0,\gamma}$ and $M_{0,-\gamma}$ are given by

\begin{equation*}\label{eq: def Mgamma}
M_{0,\pm\gamma}(\zeta) = \e^{-\frac12\zeta}\zeta^{\frac12\pm \gamma}M\l( \tfrac12\pm \gamma, 1\pm 2\gamma,\zeta\r), \quad M(a,b,\zeta)=\sum_{s=0}^\infty\frac{(a)_s}{(b)_s s!}\zeta^s,
\end{equation*}
where $(a)_s=a(a+1)(a+2)\dots (a+s-1)$. See \cite{CZN23chan, NIST} for more details.

The asymptotic estimates for $W_{0,\gamma}$ are deduced from the asymptotic estimates for $M_{0,\pm\gamma}$, recorded in Lemma A.3 from \cite{CZN23chan}, due to the relation \eqref{eq:WtoM}.
\begin{lemma}\label{asymptotic expansion W}
Let $\zeta\in\C$. Let $B_R\subset\C$ denote the closed unit ball of radius $R>0$ centered in the origin. Then,
\begin{equation*}
\begin{aligned}
W_{0,\gamma}(\zeta)&=\zeta^{\frac12 -\gamma}\mathcal{E}_{0,\gamma}(\zeta),\quad W_{0,\gamma}'(\zeta)=\zeta^{-\frac12 -\gamma}\mathcal{E}_{1,\gamma}(\zeta),
\end{aligned}
\end{equation*}
where $\mathcal{E}_{j,\gamma}\in L^\infty(B_R)$ and $\Vert \mathcal{E}_{j,\gamma}\Vert_{L^\infty(B_R)} \lesssim_{\gamma,R} 1$, for $j=0,1$.
\end{lemma}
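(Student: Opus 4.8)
The plan is to derive the claimed factorizations directly from the connection formula \eqref{eq:WtoM} together with the series representation of the confluent hypergeometric functions $M_{0,\pm\gamma}$, invoking Lemma A.3 of \cite{CZN23chan} for the control of the building blocks $M_{0,\pm\gamma}$. First I would recall from \cite{CZN23chan} that $M_{0,\pm\gamma}(\zeta)=\e^{-\zeta/2}\zeta^{1/2\pm\gamma}M(\tfrac12\pm\gamma,1\pm2\gamma,\zeta)$, and that the power series $M(a,b,\zeta)=\sum_{s\ge0}\frac{(a)_s}{(b)_s s!}\zeta^s$ is entire, hence uniformly bounded on any ball $B_R$ (the denominators $(1\pm2\gamma)_s$ never vanish since $2\gamma\notin\Z_{\le0}$ when $\gamma\ne0$, except for the integer resonances which are excluded here or handled by the second lemma). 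Consequently $M_{0,\pm\gamma}(\zeta)=\zeta^{1/2\pm\gamma}\,\widetilde{\mathcal E}_{0,\pm\gamma}(\zeta)$ with $\widetilde{\mathcal E}_{0,\pm\gamma}=\e^{-\zeta/2}M(\tfrac12\pm\gamma,1\pm2\gamma,\zeta)\in L^\infty(B_R)$, with norm depending only on $\gamma$ and $R$.

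Next, substituting into \eqref{eq:WtoM} gives
\begin{equation*}
W_{0,\gamma}(\zeta)=\frac{\Gamma(-2\gamma)}{\Gamma(\tfrac12-\gamma)}\zeta^{1/2-\gamma}\widetilde{\mathcal E}_{0,\gamma}(\zeta)+\frac{\Gamma(2\gamma)}{\Gamma(\tfrac12+\gamma)}\zeta^{1/2+\gamma}\widetilde{\mathcal E}_{0,-\gamma}(\zeta).
\end{equation*}
Factoring out $\zeta^{1/2-\gamma}$ from both terms, the second term contributes a factor $\zeta^{2\gamma}$, which is bounded on $B_R$ once one fixes a branch (the whole statement is understood on the principal branch, with $\zeta$ avoiding the cut on the negative axis; on $B_R\cap\{\Re\zeta>0\}$ or more generally on a fixed branch this is immediate, and $\mu=\Re\gamma\le 1/2$ keeps the exponents benign). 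Therefore $W_{0,\gamma}(\zeta)=\zeta^{1/2-\gamma}\mathcal E_{0,\gamma}(\zeta)$ with
\begin{equation*}
\mathcal E_{0,\gamma}(\zeta)=\frac{\Gamma(-2\gamma)}{\Gamma(\tfrac12-\gamma)}\widetilde{\mathcal E}_{0,\gamma}(\zeta)+\frac{\Gamma(2\gamma)}{\Gamma(\tfrac12+\gamma)}\zeta^{2\gamma}\widetilde{\mathcal E}_{0,-\gamma}(\zeta)\in L^\infty(B_R),
\end{equation*}
and $\|\mathcal E_{0,\gamma}\|_{L^\infty(B_R)}\lesssim_{\gamma,R}1$. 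For the derivative, I would use the identity $\p_\zeta W_{0,\gamma}(\zeta)=\l(-\tfrac12+\tfrac{1/4-\gamma^2}{\cdots}\r)$-type recurrences, or more simply differentiate the connection formula term by term: $\p_\zeta M_{0,\pm\gamma}(\zeta)=\zeta^{-1/2\pm\gamma}\l[(\tfrac12\pm\gamma)\e^{-\zeta/2}M(\cdots)+\zeta\,\p_\zeta(\e^{-\zeta/2}M(\cdots))\r]$, and the bracket is again an $L^\infty(B_R)$ function (entire times $\e^{-\zeta/2}$). Pulling out $\zeta^{-1/2-\gamma}$ exactly as above yields $W_{0,\gamma}'(\zeta)=\zeta^{-1/2-\gamma}\mathcal E_{1,\gamma}(\zeta)$ with the stated bound.

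The only genuinely delicate point is the branch-cut bookkeeping: $\zeta^{1/2-\gamma}$ and $\zeta^{2\gamma}$ are multivalued, and the Whittaker function $W_{0,\gamma}$ inherits a cut on $(-\infty,0]$. The cleanest route is to state the lemma on the principal branch, observe that on $B_R\setminus(-\infty,0]$ the powers $\zeta^{2\gamma}$ satisfy $|\zeta^{2\gamma}|=|\zeta|^{2\mu}\e^{-2\nu\arg\zeta}\le R^{2\mu}\e^{2\pi|\nu|}$, hence are uniformly bounded there, so all the $\mathcal E_{j,\gamma}$ remain in $L^\infty$. I expect this branch analysis, rather than the elementary estimates on the hypergeometric series, to be the main obstacle — but it is entirely routine once one is careful about which branch is used, and it is exactly the content already packaged in Lemma A.3 of \cite{CZN23chan}, from which the present lemma follows by the linear combination \eqref{eq:WtoM}.
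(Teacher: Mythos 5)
Your proposal is correct and follows exactly the same route the paper intends: the paper does not spell out a proof but simply states that the lemma ``follows from the asymptotic estimates for $M_{0,\pm\gamma}$, recorded in Lemma A.3 from \cite{CZN23chan}, due to the relation \eqref{eq:WtoM}.'' Your argument fills in precisely that sketch: substitute $M_{0,\pm\gamma}(\zeta)=\zeta^{1/2\pm\gamma}\,\e^{-\zeta/2}M(\tfrac12\pm\gamma,1\pm2\gamma,\zeta)$ into the connection formula, factor out $\zeta^{1/2-\gamma}$ (resp.\ $\zeta^{-1/2-\gamma}$ after differentiating), and absorb the entire factor $\e^{-\zeta/2}M(\cdots)$ together with the bounded branch factor $\zeta^{2\gamma}$ into $\mathcal E_{j,\gamma}$. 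Your bookkeeping of the principal branch and the bound $|\zeta^{2\gamma}|=|\zeta|^{2\mu}\e^{-2\nu\arg\zeta}\le R^{2\mu}\e^{2\pi|\nu|}$ is the right way to see that $\mathcal E_{j,\gamma}\in L^\infty(B_R)$, and since $\b>0$, $\b^2\ne1/4$ forces $2\gamma\in(0,1)\cup i\R\setminus\{0\}$, so the Pochhammer denominators $(1\pm2\gamma)_s$ never vanish and there is no integer resonance to worry about.
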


For $\b^2=1/4$, we have $\gamma=0$ and \eqref{eq:WtoM} is no longer valid. Then, $W_{0,0}$ is given by 
\begin{equation}\label{eq:WtoK}
W_{0,0}(\zeta)=\sqrt{\frac{\zeta}{\pi}}K_0\l(\frac{\zeta}{2}\r),
\end{equation}
where $K_0$ is the modified Bessel function of second kind of order 0. See \cite{NIST} for more details on $K_0$. We next state the asymptotic expansions for $W_{0,0}$, which follow from \eqref{eq:WtoK} and are shown in \cite{CZN23chan}.

\begin{lemma}[\cite{CZN23chan}, Lemma A.4]\label{asymptotic expansion special W}
Let $\b^2=1/4$ and $\zeta\in\C$. Let $B_R\subset\C$ denote the closed ball of radius $R>0$ centered at the origin. Then,
\begin{equation*}
W_{0,0}(\zeta) = \zeta^\frac12 \big( \E_{0,1}(\zeta) - \log (\zeta) \E_{0,2}(\zeta)\big), \quad W_{0,0}'(\zeta) = \zeta^{-\frac12} \big( \E_{1,1}(\zeta) - \log (\zeta) \E_{1,2}(\zeta)\big),
\end{equation*}
where $\E_{j,k}(\zeta)$ are entire functions in $\C$ and $\Vert \E_{j,k}\Vert_{L^\infty(B_R)}\lesssim 1$, for $j=0,1$ and $k=1,2$. 
\end{lemma}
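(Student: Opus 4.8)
The plan is to derive everything from the representation \eqref{eq:WtoK}, namely $W_{0,0}(\zeta) = \sqrt{\zeta/\pi}\, K_0(\zeta/2)$, together with the classical series expansion of the modified Bessel function $K_0$ about the origin. Recall that $K_0$ admits the convergent expansion
\begin{equation*}
K_0(w) = -\left(\log(w/2) + \gamma_E\right) I_0(w) + \sum_{k=1}^\infty \frac{(w/2)^{2k}}{(k!)^2} h_k,
\end{equation*}
where $I_0(w) = \sum_{k\geq 0} (w/2)^{2k}/(k!)^2$ is the modified Bessel function of the first kind, $\gamma_E$ is the Euler--Mascheroni constant, and $h_k = \sum_{j=1}^k 1/j$. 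Both $I_0$ and the second sum are entire functions of $w^2$, hence entire in $w$. Substituting $w = \zeta/2$ and multiplying by $\sqrt{\zeta/\pi}$, one reads off that $W_{0,0}(\zeta) = \zeta^{1/2}\bigl(\E_{0,1}(\zeta) - \log(\zeta)\E_{0,2}(\zeta)\bigr)$, where $\E_{0,2}(\zeta) = \pi^{-1/2} I_0(\zeta/4)$ is entire, and $\E_{0,1}(\zeta)$ collects the entire function $I_0(\zeta/4)$ times the constant $\pi^{-1/2}(\log 4 - \gamma_E)$ plus $\pi^{-1/2}$ times the second (entire) sum. Both are entire in $\C$, and on any ball $B_R$ their sup-norms are bounded by absolute constants (in fact by constants depending only on $R$, but since the statement fixes the normalization there is no $\gamma$-dependence to track here).

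For the derivative, differentiate $W_{0,0}(\zeta) = \sqrt{\zeta/\pi}\,K_0(\zeta/2)$ directly:
\begin{equation*}
W_{0,0}'(\zeta) = \frac{1}{2\sqrt{\pi\zeta}}\,K_0(\zeta/2) + \frac{1}{2}\sqrt{\frac{\zeta}{\pi}}\,K_0'(\zeta/2).
\end{equation*}
Using $K_0'(w) = -K_1(w)$ and the expansion $K_1(w) = \frac{1}{w} + \frac{w}{2}\log(w/2)\,I_1(w)\cdot(\text{const}) + (\text{entire})$ — more precisely $K_1(w) = w^{-1} + \log(w/2) I_1(w) \cdot \tfrac{?}{} + \cdots$ whose precise form is standard (\cite{NIST}) and in any case is of the shape $w^{-1}\times(\text{entire in }w^2) + \log(w)\times(\text{entire}) + (\text{entire})$ — one collects the terms. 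The leading $\zeta^{-1/2}$ behavior comes from the $\frac{1}{2\sqrt{\pi\zeta}}\cdot(-\log(\zeta/4)-\gamma_E)$ piece and from the $\tfrac12\sqrt{\zeta/\pi}\cdot(-2/\zeta)$ piece of $-K_1(\zeta/2)$; factoring out $\zeta^{-1/2}$, everything that remains is (entire) $+ \log(\zeta)\cdot$(entire), giving $W_{0,0}'(\zeta) = \zeta^{-1/2}\bigl(\E_{1,1}(\zeta) - \log(\zeta)\E_{1,2}(\zeta)\bigr)$ with $\E_{1,1},\E_{1,2}$ entire and uniformly bounded on $B_R$. Alternatively, and perhaps more cleanly, one differentiates the already-obtained expansion $W_{0,0}(\zeta) = \zeta^{1/2}\E_{0,1}(\zeta) - \zeta^{1/2}\log(\zeta)\E_{0,2}(\zeta)$ term by term, using $\frac{d}{d\zeta}(\zeta^{1/2}\log\zeta) = \tfrac12\zeta^{-1/2}\log\zeta + \zeta^{-1/2}$, which immediately produces the claimed form with $\E_{1,2}(\zeta) = \tfrac12\E_{0,2}(\zeta) + \zeta\E_{0,2}'(\zeta) \cdot(\cdots)$ — all entire — and $\E_{1,1}$ assembled from $\tfrac12\E_{0,1} + \zeta\E_{0,1}' - \E_{0,2}$, again entire and bounded on $B_R$. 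This second route avoids invoking the $K_1$ expansion altogether and only uses that $\E_{0,1},\E_{0,2}$ are entire, which we established in the previous step.

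I expect no serious obstacle here: the result is essentially a bookkeeping exercise once \eqref{eq:WtoK} and the textbook logarithmic expansion of $K_0$ are in hand. The one point requiring a little care is ensuring that after factoring out the prefactors $\zeta^{\pm 1/2}$ the remaining coefficient functions are genuinely \emph{entire} (not merely holomorphic off the branch cut) — this is where it matters that $I_0$ and the companion series are functions of $\zeta^2$, so no half-integer powers sneak in — and that the logarithm appearing is exactly $\log\zeta$ up to an additive constant absorbed into the entire part, so that the branch structure claimed in the lemma is the correct one. Uniform boundedness on $B_R$ is then automatic since entire functions are bounded on compact sets. As noted, the cleanest write-up obtains the derivative expansion by differentiating the function expansion rather than by a second appeal to Bessel asymptotics.
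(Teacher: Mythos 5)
Your approach is correct and matches the route the paper indicates: the lemma is cited from \cite{CZN23chan} and the paper notes only that it "follows from \eqref{eq:WtoK}", which is exactly the $K_0$ representation you use, combined with the standard logarithmic series for $K_0$ and then (cleanly) differentiating the resulting expansion term by term. One tiny slip to fix in a write-up: with $w=\zeta/2$ the Bessel argument in $\E_{0,2}$ is $I_0(\zeta/2)$, not $I_0(\zeta/4)$ (the $\zeta/4$ only enters through $(w/2)^{2k}$), though this does not affect the structure of the argument or the conclusion.
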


We finish our discussion with the proof of Lemma \ref{analytic continuation lemma} when $\b^2\neq 1/4$.
\begin{proof}[Proof of Lemma \ref{analytic continuation lemma}]
From \eqref{eq:WtoM} we write
\begin{equation*}
\begin{split}
W(\zeta)=A(\gamma)M_{0,\gamma}(2m\zeta) + B(\gamma)M_{0,-\gamma}(2m\zeta).
\end{split}
\end{equation*}
The analytic continuation property of $M_{0,\gamma}(\zeta)$, see \cite{NIST}, states that
\begin{equation*}
M_{0,\gamma}(\zeta \e^{\pm \pi i}) = \pm i \e^{\pm \gamma \pi i}M_{0,\gamma}(\zeta).
\end{equation*}
Therefore, we can write
\begin{equation*}
M_{0,\gamma}(2m(-\eta +i\ep))=M_{0,\gamma}(2m(\eta -i\ep)\e^{i\pi})=i\e^{\gamma\pi i}M_{0,\gamma}(2m(\eta -i\ep))
\end{equation*}
and
\begin{equation*}
M_{0,\gamma}(2m(-\eta -i\ep))=M_{0,\gamma}(2m(\eta +i\ep)\e^{-i\pi})=-i\e^{-\gamma\pi i}M_{0,\gamma}(2m(\eta +i\ep)).
\end{equation*}
Similarly, we have
\begin{equation*}
M_{0,-\gamma}(2m(-\eta +i\ep))=i\e^{-\gamma\pi i}M_{0,-\gamma}(2m(\eta -i\ep))
\end{equation*}
and
\begin{equation*}
M_{0,-\gamma}(2m(-\eta -i\ep))=-i\e^{\gamma\pi i}M_{0,-\gamma}(2m(\eta +i\ep)).
\end{equation*}
Now, we have that
\begin{equation*}
W(-\eta +i\ep)=i\e^{\gamma\pi i}A(\gamma)M_{0,\gamma}(2m(\eta -i\ep))+i\e^{-\gamma\pi i}B(\gamma)M_{0,-\gamma}(2m(\eta -i\ep))
\end{equation*}
and
\begin{equation*}
W(-\eta -i\ep)=-i\e^{-\gamma\pi i}A(\gamma)M_{0,\gamma}(2m(\eta +i\ep))-i\e^{\gamma\pi i}B(\gamma)M_{0,-\gamma}(2m(\eta +i\ep)).
\end{equation*}
Since both $M_{0,\gamma}(\zeta)$ and $M_{0,-\gamma}(\zeta)$ are continuous functions in the complex subset $\l\lbrace \zeta\in\C: \Re(\zeta)\geq0\r\rbrace,$ it is easily seen that
\begin{equation*}
\begin{split}
\lim_{\ep\rightarrow0}W(-\zeta +i\ep)-W(-\zeta -i\ep)&= i\l( \e^{\gamma\pi i}+ \e^{-\gamma\pi i}\r) \l(A(\gamma)M_{0,\gamma}(\zeta)+B(\gamma)M_{0,-\gamma}(\zeta)\r) \\
&=2i\cos(\gamma\pi)W(\zeta ).
\end{split}
\end{equation*}
\end{proof}

 \section*{Acknowledgments} 
The research of MCZ was partially supported by the Royal Society URF\textbackslash R1\textbackslash 191492 and EPSRC Horizon Europe Guarantee EP/X020886/1.

\bibliographystyle{abbrv}
\bibliography{CZN-InvBoussStripBiblio.bib}
\end{document}